\author{Behnam Mafakheri$^{1}$, Jonathan H. Manton$^{1}$ and Iman Shames$^{2}$% <-this % stops a space
\thanks{*This work was supported by the Australian Research Council under the Discovery Projects funding scheme (DP210102454).}% <-this % stops a space
\thanks{$^{1}$B. Mafakheri and J. H. Manton are with Department of Electrical and Electronic Engineering,
        University of Melbourne, VIC 3010, Australia
        {\tt\small \{mafakherib, jmanton\}@unimelb.edu.au}}%
\thanks{$^{2}$I. Shames is  with the The CIICADA Lab, School of Engineering, The Australian National University, Canberra, ACT 2601, Australia
        {\tt\small iman.shames@anu.edu.au}}%
}
\date{\today}
\newtheorem{theorem}{Theorem}
\newtheorem{definition}[theorem]{Definition}
\newtheorem{remark}[theorem]{Remark}
\newtheorem{lemma}[theorem]{Lemma}
\DeclareMathOperator*{\argmin}{arg\,min}
\newcommand{\prox}{\mathrm{Prox}}
\newcommand{\RR}{\mathbb{R}}
\newcommand{\NN}{\mathbb{N}}
\newcommand{\cE}{{\mathcal{E}}}
\newcommand{\cK}{{\mathcal{K}}}
\newcommand{\cL}{{\mathcal{L}}}
\newcommand{\cN}{{\mathcal{N}}}
\newcommand{\cO}{{\mathcal{O}}}
\newcommand{\cV}{{\mathcal{V}}}
\newcommand{\va}{{\boldsymbol{a}}}
\newcommand{\vb}{{\boldsymbol{b}}}
\newcommand{\vs}{{\boldsymbol{s}}}
\newcommand{\vt}{{\boldsymbol{t}}}
\newcommand{\vu}{{\boldsymbol{u}}}
\newcommand{\vv}{{\boldsymbol{v}}}
\newcommand{\vx}{{\boldsymbol{x}}}
\newcommand{\vy}{{\boldsymbol{y}}}
\newcommand{\vlambda}{{\boldsymbol{\lambda}}}
\newcommand{\veps}{{\boldsymbol{\epsilon}}}
\newcommand{\veta}{{\boldsymbol{\eta}}}
\title{\LARGE \bf On Distributed Nonconvex Optimisation Via Modified ADMM}
\begin{document}

\maketitle
\thispagestyle{empty}
\pagestyle{empty}

\begin{abstract}
    This paper addresses the problem of nonconvex nonsmooth decentralised optimisation in multi-agent networks with undirected connected communication graphs. Our contribution lies in introducing an algorithmic framework designed for the distributed minimisation of the sum of a smooth (possibly nonconvex and non-separable) function and a convex (possibly nonsmooth and non-separable) regulariser. The proposed algorithm can be seen as a modified version of the ADMM algorithm where, at each step, an ``inner loop'' needs to be iterated for a number of iterations. The role of the inner loop is to aggregate and disseminate information across the network. We observe that a naive decentralised approach (one iteration of the inner loop) may not converge. We establish the asymptotic convergence of the proposed algorithm to the set of stationary points of the nonconvex problem where the number of iterations of the inner loop increases logarithmically with the step count of the ADMM algorithm. We present numerical results demonstrating the proposed method's correctness and performance.
\end{abstract}

\section{Introduction}
    While distributed solutions to large-scale convex optimisation problems have been studied extensively in the last two decades, the same cannot be said about the nonconvex variant~\cite{tatarenko2017non, scutari2018parallel}. We propose a distributed algorithm for solving problems of the following archetypal form:
\begin{align}\label{my_P}
    % \tag{Consensus Optimisation}
    \begin{array}{ll}
        \underset{\vx \in \RR^p}{\mbox{minimize}} &\displaystyle{f(\vx):=\sum_{i=1}^n f_i(\vx) + g(\vx)},   
    \end{array}
\end{align}
where each $f_i$ represents smooth (possibly nonconvex, non-separable) functions, while $g$ is a convex (possibly nonsmooth) function. The objective is to have $n$ agents cooperate in solving the optimisation problem over an underlying fixed communication graph. It is assumed that each agent $i$ privately possesses knowledge of the local objective function $f_i(\vx)$ and the function $g(\vx)$, where $g$ is
used to impose desired
structures on the solution (e.g., $\ell_1$ norm for sparsity) and/or used to enforce certain constraints. In machine learning applications, this controls the complexity of the model to avoid over-fitting in the training process \cite{chang2016asynchronous}.

The distributed algorithms aimed at solving \eqref{my_P} can be classified into either (sub)gradient-based methods or operator splitting-based methods, applied to either the primal or dual problems. In this work, we explore the Alternating Direction Method of Multipliers (ADMM), a powerful operator splitting-based method widely recognised for its effectiveness in numerically solving optimisation problems \cite{boyd2011distributed, deng2016global, hong2016convergence}. However, existing distributed ADMM algorithms face limitations in their applicability, as they either assume a communication network structured as a star graph with a central node or overlook the role of the regularizing function $g$. In scenarios where a central node is present in the graph, previous works \cite{hong2016convergence, chang2016asynchronous, hong2017distributed} have shown that ADMM converges in asynchronous settings to a stationary point of the problem with a sublinear rate. Modified ADMM versions tailored for arbitrary connected communication graphs have been proposed by authors in \cite{pmlr-v70-hong17a, yi2022sublinear}. Specifically, the Prox-PDA algorithm in \cite{pmlr-v70-hong17a} achieves sublinear convergence to a stationary point of the problem \eqref{my_P} with $g\equiv 0$.

In a similar problem context, \cite{yi2022sublinear} proves the convergence of a modified ADMM version to the global minimum under the condition that the global cost function satisfies the Polyak-{\L}ojasiewicz condition. However, it is crucial to emphasize that these existing schemes and convergence analyses are not directly transferable to address our specific problem \eqref{my_P} when $g \neq 0$, as they do not consider the presence of the regulariser function, potentially rendering the objective function nonsmooth. To overcome this limitation, our work introduces an ADMM-based algorithm explicitly designed to decentralise the optimisation algorithm. This enables the algorithm's applicability to arbitrarily connected communication graphs while accommodating convex regulariser functions $g$. Our contributions can be summarised as follows:
\begin{itemize}
    \item The proposed algorithm is applicable to any arbitrary graph that is connected and extends to problems with convex regulariser functions $g$.
    \item We prove the algorithm's convergence to the set of stationary points of the problem, provided that the $f_i$'s are smooth and $g$ is a closed convex function.
    \item Instead of assuming a central node in the graph, we propose that nodes emulate the existence of such a central node by approximating the information that would have been provided through interactions with their immediate neighbours.
\end{itemize}

The remainder of this paper is organised as follows: Section \ref{preliminaries} introduces the necessary notation and preliminary definitions. The main results and the proposed algorithm are discussed in Section \ref{main_results}, while Section \ref{proof} is dedicated to the accompanying proofs. Section \ref{numerical_results} provides the numerical results, validating the effectiveness and efficiency of the proposed approach. In Section \ref{conclusion}, we summarise our contributions and outline future research directions.

\section{Preliminaries} \label{preliminaries}
    \subsubsection*{Notations}
    Throughout this paper, we denote the set of real numbers and extended real numbers as $\RR$ and $\overline{\RR}:=\RR \cup \{+\infty\}$. Let $[n]$ denote the set of $n$ positive integers $\{1, 2, \dots, n \}$. For vectors $\va, \vb\in \RR^n$ the Euclidean inner product and its corresponding norms are denoted by $\langle \va , \vb \rangle$ and $\| \va\|$ respectively. For a matrix $A\in \RR^{m\times n}$, $\| A\|$ is the spectral norm of the matrix. A vector of all ones with appropriate length is shown by $\boldsymbol{1}$. We say a matrix $W$ is row (column) stochastic if $W \boldsymbol{1} = 1$ ($\boldsymbol{1}^T W = \boldsymbol{1}^T$). A doubly stochastic matrix is defined as both column and row stochastic.
    % We show the $\limsup$ and $\liminf$ of a sequence by $\overline{\lim}$ and ${\underline{\lim}}$, respectively.

\subsubsection*{Graphs}
    A graph $G$ is determined by its set of vertices and edges. We write $G=(\cV, \cE)$, where $\cV = \{1,\dots, n\}$ is the set of nodes and $\cE \subseteq \cV\times \cV$ is the set of edges. The presence of edge $(i,j)\in \cE$ indicates that nodes $i$ and $j$ are adjacent and node $i$ can receive information from node $j$. In this study, we assume that the graph $G$ is undirected and connected, i.e., if $(i,j)\in \cE$, then $(j,i)\in \cE$ and there is a path between every pair of nodes. The set of neighbours of node $i$ is denoted by $\cN_i: = \{j \mid (i,j)\in \cE \}$. We also write $\overline{\cN}_i = \cN_i \cup \{i\} $. A matrix $W$ is a \emph{weight matrix} associated with the graph if for some $\eta >0$ we have $w_{ij} > \eta$ whenever $j\in \overline{\cN}_i$ and $w_{ij}=0$ otherwise. The following lemma plays an important role in our later analysis.
    \begin{lemma}\label{fix_graph_consensus_rate}
        (\cite[Proposition 2]{nedic2009distributed}, \cite[Lemma 5.2.1]{tsitsiklis1984problems}) For a doubly stochastic weight matrix $W$ associated with a connected graph $G$ with $n$ nodes, there exist some $c >0$ and $\rho \in (0,1)$ such that for all $m\geq 1$,
            $\|W^m - n^{-1}\boldsymbol{1} \boldsymbol{1}^T \| \leq c \rho ^m.$
    \end{lemma}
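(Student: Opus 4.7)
The plan is to reduce the claim to a spectral statement about $W - P$ where $P := n^{-1}\boldsymbol{1}\boldsymbol{1}^T$, then invoke Gelfand's formula. First, since $W$ is doubly stochastic, $W\boldsymbol{1} = \boldsymbol{1}$ and $\boldsymbol{1}^T W = \boldsymbol{1}^T$, which immediately gives $WP = PW = P$ and $P^2 = P$. A one-line induction then yields the key identity
\begin{equation*}
(W - P)^m = W^m - P \quad \text{for all } m \geq 1,
\end{equation*}
so it is enough to bound $\|(W - P)^m\|$ geometrically.

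Next, I would show $\rho(W - P) < 1$. Because $G$ is connected and $w_{ij} > \eta > 0$ for every $j \in \overline{\cN}_i$ (in particular $w_{ii} > \eta$), the nonnegative matrix $W$ is irreducible and aperiodic. The Perron--Frobenius theorem then gives that $1$ is a simple eigenvalue of $W$ that strictly dominates every other eigenvalue in modulus. Since $P$ is the orthogonal projection onto the one-dimensional eigenspace $\mathrm{span}(\boldsymbol{1})$ associated with this eigenvalue, and since this eigenspace is spectrally invariant under $W$, subtracting $P$ removes precisely the eigenvalue $1$ from the spectrum while leaving all others unchanged. Hence
\begin{equation*}
\rho(W - P) = \max\{|\lambda| : \lambda \in \mathrm{spec}(W), \lambda \neq 1\} =: \rho^\star < 1.
\end{equation*}

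Finally, by Gelfand's formula $\lim_{m \to \infty} \|(W - P)^m\|^{1/m} = \rho^\star$, so for any fixed $\rho \in (\rho^\star, 1)$ there exists $c > 0$ (depending on $\rho$ and on the norm through the submultiplicative constant) such that $\|(W - P)^m\| \leq c \rho^m$ for every $m \geq 1$. Combined with the identity from the first step, this gives the bound $\|W^m - n^{-1}\boldsymbol{1}\boldsymbol{1}^T\| \leq c \rho^m$.

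The main obstacle is justifying the strict dominance of the eigenvalue $1$: this requires both irreducibility (from connectedness of $G$ together with positivity of $w_{ij}$ on edges) and aperiodicity (from the self-loop weights $w_{ii} > \eta > 0$). Once these two properties are checked the remainder is a clean application of Perron--Frobenius and Gelfand's formula, with no delicate estimates required.
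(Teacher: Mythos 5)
Your proof is correct. Note that the paper does not prove this lemma at all; it imports it from the cited references, so the relevant comparison is with the arguments of Nedi\'c--Ozdaglar and Tsitsiklis. Those proofs are combinatorial: they use the uniform lower bound $\eta$ on the positive entries of $W$ to show directly that every entry of $W^m$ approaches $n^{-1}$ at a geometric rate (via a contraction of the spread, or an ergodicity-coefficient argument). That route yields explicit constants $c$ and $\rho$ in terms of $n$ and $\eta$ and, crucially, extends to products of \emph{time-varying} stochastic matrices, which is the setting those references care about. Your route --- the identity $(W-P)^m = W^m - P$ from $WP=PW=P^2=P$, primitivity of $W$ (irreducibility from connectedness plus aperiodicity from $w_{ii}>\eta$, which the paper's definition of a weight matrix via $\overline{\cN}_i$ does guarantee), Perron--Frobenius to get $\rho(W-P)<1$, and Gelfand's formula --- is cleaner and entirely adequate for the fixed-graph setting of Assumption (A4), at the cost of non-explicit constants and no extension to switching topologies. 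One small point worth making explicit: Gelfand gives $\|(W-P)^m\|\leq c\rho^m$ only for all sufficiently large $m$, and you cover the remaining finitely many indices by enlarging $c$; also, your claim that subtracting $P$ deletes exactly the eigenvalue $1$ is justified because $P$ is the spectral projection for the simple eigenvalue $1$ (left and right eigenvectors both equal to $\boldsymbol{1}$ by double stochasticity), even when $W$ is not symmetric or diagonalisable.
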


\subsubsection*{Subdifferentials and Proximal Maps}
    For indicating a set-valued mapping we use $A:\RR^p \to \RR^q$ that maps a point $\vx \in \RR^p$ to a set $A(\vx)\subset \RR^q$. The \emph{graph} of $A$ is defined as $gra \  A := \{(\vx, \vu) \in \RR^p \times \RR^q\ | \  \vu \in A(\vx) \}$. The \emph{domain} and \emph{epigraph} of an extended real-valued function $f: \RR^p \to \overline{\RR}$ are defined as the sets $\text{dom} \ f := \{\vx\in \RR^p \ | \ f(\vx) < +\infty \}$ and $\text{epi} \ f := \{(\vx, t) \in \RR^p \times \RR \ | \ f(\vx) \leq t \}$, respectively. A function $f$ is called proper if $f(\vx)<\infty$ for at least one $\vx\in \RR^p$ and $f(\vx)>-\infty$ for all $\vx\in \RR^p$. It is said to be \emph{closed} or equivalently \emph{lower semicontinuous (lsc)} if its epigraph is a closed set in $\RR^{p+1}$, see \cite[Thm. 1.6]{rockafellar1998VariationalAnalysis}. 
    Now, we recall some definitions in relation to subdifferential calculus.
    \begin{definition}
        (Subdifferentials, \cite[Ch. 8]{rockafellar1998VariationalAnalysis}) Let $f:\RR^p \to \overline{\RR}$ be a proper lsc function.
        \begin{enumerate}[(i)]
            \item For $\vx\in \text{dom}\ f$, the \emph{Frechet-subdifferential} (also known as the \emph{regular-subdifferential}) of $f$ at $\vx$, denoted by $\widehat{\partial}f(x)$, is defined by
            \begin{align*}
                \widehat{\partial}f(x)&:=\{\vs\in \RR^p : \\
                &\liminf_{\vx' \to \vx, \vx'\neq \vx}{\frac{f(\vx')- f(\vx) - \langle \vs, \vx'- \vx \rangle}{\|\vx'-\vx \|}}\geq 0 \}.
            \end{align*}
            If $\vx \not\in \text{dom} f$, define $\widehat{\partial} f(\vx) = \emptyset$.
            \item For $\vx\in \RR^p$, the \emph{limiting-subdifferential}, or \emph{subdifferential}, of $f$ at $\vx$, denoted by $\partial f(x)$ is defined as
            \begin{align*}
                \partial f(\vx)&:= \{\vs \in \RR^p : \exists \vx^r\to \vx, \\
                &\  f(\vx^r) \to f(\vx),\exists \vs^r \in \widehat{\partial}f(\vx^r)\to \vs \ \text{as}\  r\to \infty \}.
            \end{align*}
        \end{enumerate}
    \end{definition}
    \begin{remark} \label{subdiff_properties}
        % \begin{enumerate}[(i)]
        % \item For any $\vx\in\RR^n$, the definition above implies $\widehat{\partial}f(\vx) \subset \partial f(\vx)$ where the first set is closed and convex while the second is closed.
        % \item 
        For sequences $\{\vx\}_{r\in \NN}$ and $\{\vs\}_{r\in \NN}$ satisfying $\vs^r \in \partial f(\vx^r)$ and $(\vx^r, f(\vx^r), \vs^r) \to (\vx, f(\vx), \vs)$ as $r \to \infty$, then $\vs\in \partial f(\vx)$. \label{closeness_subdiff}
        % \item (Fermat's rule) If $\vx\in \RR^n$ is a local minimiser of $f$, then $0\in \partial f(\vx)$. Points that $0$ is in their subdifferential are called \emph{critical points}.
        % \item If $f$ is convex, the set of critical points and minimisers coincide and
        % \begin{align*}
        %     &\widehat{\partial}f(\vx) = \partial f(\vx)= \\
        %     & \ \{\vs\in \RR^n :
        %     f(\vx') \geq f(\vx) + \langle \vs, \vx'-\vx \rangle, \  \forall \vx' \in \RR^n \}.
        % \end{align*}
        % \end{enumerate}
    \end{remark}

    % We say a function $f$ is \emph{prox-bounded} if there is a $\gamma>0$ such that $f+\frac{1}{2\gamma}\|\cdot \|^2$ is lower bounded. The supremum of all such $\gamma$ is denoted by $\gamma_f$. 
    The \emph{proximal mapping} of $f$ with parameter $\gamma$ is a set valued mapping defined as $\prox_{\gamma f}(\vx) : = \argmin_{\vu\in \RR^p} \{f(\vu) + \frac{1}{2 \gamma} \|\vu - \vx \|^2\}$. 
    % We have the following lemma on the well-definedness of the proximal operator.
    % \begin{lemma}
    %     (\cite[Theorem 1.25]{rockafellar1998VariationalAnalysis}) if function $f$ is proper, lower semi-continuous, and prox-bounded, then $\prox_{\gamma f}$ is a nonempty and compact subset of $\RR^n$ for $\gamma \in (0, \gamma_f)$.
    % \end{lemma}
    The result below can be seen from the necessary optimality condition of the problem defining $\prox_{\gamma f}$.
    \begin{lemma}\label{prox_map_optimality_cond}
        If $\vu\in \prox_{\gamma f}(\vx)$ then $\vx - \vu \in \gamma \partial f(\vu)$.
    \end{lemma}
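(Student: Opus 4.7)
The plan is to apply Fermat's rule for lower semicontinuous functions to the minimisation problem defining $\prox_{\gamma f}$. By hypothesis, $\vu$ is a (global, hence local) minimiser of the function
\begin{equation*}
    \phi(\vw) := f(\vw) + \tfrac{1}{2\gamma}\|\vw - \vx\|^2.
\end{equation*}
The generalised Fermat rule (\cite[Thm.~10.1]{rockafellar1998VariationalAnalysis}) then gives the first-order necessary optimality condition $0 \in \partial \phi(\vu)$.

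The next step is to apply the subdifferential sum rule to split $\partial \phi(\vu)$ into the contribution coming from $f$ and that coming from the quadratic penalty $h(\vw) := \tfrac{1}{2\gamma}\|\vw - \vx\|^2$. Since $h$ is continuously differentiable on $\RR^p$ with $\nabla h(\vw) = \tfrac{1}{\gamma}(\vw - \vx)$, the smooth-plus-nonsmooth sum rule (\cite[Exercise~8.8]{rockafellar1998VariationalAnalysis}) yields the exact equality
\begin{equation*}
    \partial \phi(\vu) = \partial f(\vu) + \tfrac{1}{\gamma}(\vu - \vx).
\end{equation*}
Combining this with the optimality condition $0\in \partial \phi(\vu)$ produces $\tfrac{1}{\gamma}(\vx - \vu) \in \partial f(\vu)$, which after multiplying by $\gamma$ is precisely the claimed inclusion $\vx - \vu \in \gamma\, \partial f(\vu)$.

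The only delicate point is invoking the sum rule without any convexity assumption on $f$; this is why it matters that $h$ is smooth, since the general limiting sum rule requires qualification conditions, but the smooth-plus-lsc case holds unconditionally. Everything else is bookkeeping, so I do not expect any genuine obstacle in the argument. Properness of $f$ is implicitly needed so that $\phi$ is proper and the argmin set is well-defined, but this is consistent with the setting in which proximal maps are discussed in the paper.
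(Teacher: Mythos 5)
Your proof is correct and takes essentially the same route as the paper, which simply asserts that the lemma ``can be seen from the necessary optimality condition of the problem defining $\prox_{\gamma f}$''; you have filled in exactly that argument via the generalised Fermat rule and the smooth-plus-lsc sum rule, with the right citations and the right caveat about why no qualification condition is needed.
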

    \begin{lemma}\label{prox_properties_convex}
        (\cite{parikh2014proximal}) If the function $f$ is proper, lower semicontinuous and convex, then the following hold.
        \begin{enumerate}[(i)]
        \item $\prox_{\gamma f}$ is single-valued for every $\gamma >0$.
        \item $\prox_f$ is firmly nonexpansive \cite[Sec 2.3]{ryu2022large} and therefore Lipschitz continuous with $L=1$.
        % i.e.
        % \begin{align*}
        %     &\| \prox_f(\vx_2) - \prox_f(\vx_1)\|^2 \leq \\
        %     &\qquad \langle \prox_f(\vx_2) - \prox_f(\vx_1),\ \vx_2 - \vx_1 \rangle.
        % \end{align*} \label{prox_firm_nonexp}
        \end{enumerate}
    \end{lemma}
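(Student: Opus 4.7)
For part (i), the plan is to argue that the auxiliary function
\[
h_\vx(\vu) := f(\vu) + \tfrac{1}{2\gamma}\|\vu - \vx\|^2
\]
is proper, lsc, and \emph{strongly convex} with modulus $1/\gamma$. Properness and lower semicontinuity are inherited directly from $f$ and the continuity of the quadratic term. Strong convexity is the key: since $f$ is convex and $\tfrac{1}{2\gamma}\|\cdot - \vx\|^2$ is strongly convex with modulus $1/\gamma$, the sum is strongly convex with the same modulus. From strong convexity I would immediately obtain (a) coercivity, hence level-boundedness of $h_\vx$, and (b) strict convexity. Combined with the lsc property, a standard Weierstrass argument on any sublevel set gives existence of a minimiser, and strict convexity gives uniqueness. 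This shows $\prox_{\gamma f}(\vx)$ is a singleton for every $\vx$ and every $\gamma>0$.

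For part (ii), the plan is to use the optimality condition already recorded in Lemma \ref{prox_map_optimality_cond} together with the monotonicity of $\partial f$ that follows from convexity of $f$. Let $\vu = \prox_f(\vx)$ and $\vv = \prox_f(\vy)$. By Lemma \ref{prox_map_optimality_cond} (with $\gamma=1$) we have $\vx - \vu \in \partial f(\vu)$ and $\vy - \vv \in \partial f(\vv)$. Since $f$ is proper lsc convex, $\partial f$ is a monotone operator, i.e.,
\[
\langle (\vx-\vu) - (\vy-\vv),\; \vu - \vv \rangle \geq 0.
\]
Rearranging yields
\[
\|\vu - \vv\|^2 \leq \langle \vu - \vv,\; \vx - \vy \rangle,
\]
which is exactly firm nonexpansiveness of $\prox_f$. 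Applying the Cauchy--Schwarz inequality to the right-hand side and dividing by $\|\vu - \vv\|$ (trivial if zero) delivers $\|\vu - \vv\| \leq \|\vx - \vy\|$, i.e., Lipschitz continuity with constant $L=1$.

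I do not anticipate any real obstacle, as both claims are classical. The only step requiring care is the existence part of (i): strong convexity of $h_\vx$ must be used to guarantee that minimising sequences are bounded, so that the lsc property suffices to pass to the limit and attain the infimum. For (ii), the monotonicity of $\partial f$ under convexity is standard and is the only nontrivial ingredient; everything else is algebraic manipulation and Cauchy--Schwarz.
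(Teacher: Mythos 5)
Your proposal is correct. Note that the paper does not prove this lemma at all; it is imported by citation from \cite{parikh2014proximal} and \cite{ryu2022large}, so your argument is a self-contained replacement for those references rather than an alternative to a proof in the text. Both halves are the standard arguments: part (i) via strong convexity of $\vu \mapsto f(\vu) + \tfrac{1}{2\gamma}\|\vu-\vx\|^2$, and part (ii) via monotonicity of $\partial f$ applied to the optimality conditions at $\vu=\prox_f(\vx)$ and $\vv=\prox_f(\vy)$, followed by Cauchy--Schwarz. Two small points deserve explicit mention if you write this out in full. First, for the existence step in (i), the cleanest route to coercivity is to invoke the fact that a proper lsc convex $f$ admits an affine minorant $f(\vu)\ge \langle \va,\vu\rangle + b$, so the quadratic term dominates and sublevel sets of $h_\vx$ are bounded; saying ``strong convexity immediately gives coercivity'' is true for proper lsc convex functions but does quietly rely on this minorisation (or on the existence of a subgradient somewhere). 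Second, Lemma \ref{prox_map_optimality_cond} is stated for the limiting subdifferential of a possibly nonconvex $f$; your monotonicity step needs the convex subdifferential, and the two coincide for proper lsc convex functions, which is worth one sentence to make the chain of reasoning airtight.
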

    
    We write $f\in C^{1,1}_L$ to indicate the class of functions $f:\RR^p \to {\RR}$ that are differentiable and have Lipschitz continuous gradients with parameter $L$. For simplicity, we say that such an $f$ is \emph{L-smooth} or \emph{smooth}. We have the following important lemma on smooth functions.
    
    \begin{lemma} \label{descent_lemma_1} 
        (Descent Lemma \cite[Proposition~A.24]{bertsekas1997nonlinear}) Let the function $f: \RR^p \to {\RR}$ be an $L$-smooth function. Then for every $\vx$, $\vy \in \RR^p$ the following holds
        \begin{align*}
            f(\vy) \leq f(\vx) + \langle \nabla f(\vx), \vy - \vx\rangle + {L}/{2} \|\vy - \vx \|^2.
        \end{align*}
    \end{lemma}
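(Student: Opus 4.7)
The plan is to apply the fundamental theorem of calculus along the line segment connecting $\vx$ and $\vy$, and then bound the resulting integral using Lipschitz continuity of $\nabla f$. Since $f$ is $C^1$, the univariate function $\varphi(t) := f(\vx + t(\vy-\vx))$ on $[0,1]$ is continuously differentiable with $\varphi'(t) = \langle \nabla f(\vx + t(\vy-\vx)), \vy - \vx \rangle$, so by the fundamental theorem of calculus
\begin{equation*}
    f(\vy) - f(\vx) = \int_0^1 \langle \nabla f(\vx + t(\vy-\vx)), \vy - \vx\rangle\, dt.
\end{equation*}

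The next step is to add and subtract the linear term $\langle \nabla f(\vx), \vy - \vx\rangle$ inside the integral, which produces
\begin{equation*}
    f(\vy) - f(\vx) - \langle \nabla f(\vx), \vy - \vx\rangle = \int_0^1 \langle \nabla f(\vx + t(\vy-\vx)) - \nabla f(\vx), \vy - \vx\rangle\, dt.
\end{equation*}
I then take absolute values, apply the Cauchy--Schwarz inequality inside the integral, and invoke the $L$-Lipschitz property of $\nabla f$ to bound the first factor by $L\, t\, \|\vy-\vx\|$. This yields an integrand bounded above by $L t \|\vy-\vx\|^2$, and integrating $\int_0^1 L t\, dt = L/2$ finishes the estimate, producing the desired quadratic upper bound.

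There is no real obstacle here: this is a textbook calculation, and the only care needed is to justify differentiation under the composition (immediate from $f \in C^{1,1}_L$ and the chain rule) and the use of Cauchy--Schwarz before the Lipschitz bound, rather than after, to preserve the correct $t$-dependence that gives the factor $1/2$. Since the claim is directly cited from \cite[Proposition~A.24]{bertsekas1997nonlinear}, one could alternatively just refer to that reference without reproducing the argument.
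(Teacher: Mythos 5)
Your proof is correct and is the standard argument (fundamental theorem of calculus along the segment, Cauchy--Schwarz, then the Lipschitz bound with the factor $t$ preserved to yield $L/2$); the paper itself offers no proof and simply cites \cite[Proposition~A.24]{bertsekas1997nonlinear}, which uses essentially this same calculation.
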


\section{Algorithm And The Main Results} \label{main_results}
This section presents a decentralised ADMM-based algorithm for solving \eqref{my_P} under the following assumptions:
    \begin{enumerate}[label=(A\arabic*)]

        \item \label{smotthnes_f_i} Each $f_i:\RR^p\to \RR$ is in $C_L^{1,1}$ for all $i\in [n]$.
        
        \item \label{proper_f_i} The functions $f_i$ and $g$ are proper and the function $f:\RR^p \to \overline{\RR}$ is lower bounded. 
        
        \item \label{g_X_assump} The function $g(\vx)$ is convex and possibly nonsmooth.
        
        \item \label{graph_connec_assump} The underlying communication graph, $G=(\mathcal{V}, \mathcal{E})$, is undirected, connected, and fixed. 
    
    \end{enumerate}
The (centralised) ADMM algorithm with penalty parameter $\beta$ applied to \eqref{my_P} is
\begin{subequations}\label{eq:ADMM_updates}
    \begin{align}
        \widetilde{\vx}^{r+1} \!=& n^{-1}\sum_{j=1}^n (\vx_j^r + {\vlambda_j^r/\beta}), \label{eq: central step}\\
        \vx_0^{r+1} \in& \prox_{\frac{1}{n\beta} g}(\widetilde{\vx}^{r+1}) ,  \label{x_0_update}\\
        \vx_i^{r+1} \!=& \argmin_{\vx_i\in \RR^{p}} f_i(\vx_i) + \langle \vx_i, \vlambda_i^r \rangle + \frac{\beta}{2}\|\vx_i - \vx_{0}^{r+1} \|^2, \label{x_i_update}\\
        \vlambda_i^{r+1} \!=& \vlambda_i^{r} + \beta(\vx_i^{r+1} - \vx_{0}^{r+1}), \label{dual_update}
    \end{align}
\end{subequations}
where $i\in[n]$. While the last two iterations can be done in parallel for $n$ agents, the algorithm requires a central node to perform the first update \eqref{x_0_update}. To create a decentralised algorithm, each node should be able to estimate $\widetilde{\vx}^{r+1}$ locally. The naive algorithm, where each agent takes average over its neighbours instead of the step \eqref{eq: central step}, will not necessarily converge. Therefore, we add an inner loop for estimating $\widetilde{x}^{r+1}$ where an $\epsilon$-consensus algorithm is used as described in Algorithm \ref{alg:concensus}. 

\SetKwComment{Comment}{/* }{ */}

\begin{algorithm}
    \caption{$\epsilon$-consensus algorithm}\label{alg:concensus}
    \KwData{$\epsilon, G, n, W:=[w_{ij}], \{\vv_i\}_{i=1}^n$}
    \KwResult{$\{\vv_{0,i} \}_{i=1}^n$}
    $\vv_i^0 \gets \vv_i$\ and $\bar{\vv} :={n}^{-1}\sum_{i=1}^n \vv_i$;
    
    \For {$l=0, 1, 2, \dots$}{
    
    \For {$i=1,\dots , n$}
        {   
            in parallel 
            
            \While{$\|\vv_i^l - \bar{\vv} \| >\epsilon$}{
            $\vv_i^{l+1} \gets \sum_{i=1}^n w_{ij}\vv_i^l$}

        }
    }
    $\vv_{0,i} \gets \vv_i^{l}$
    \label{simple_consensus_Alg}
\end{algorithm}

    Assume graph $G$ satisfies \ref{graph_connec_assump} and $W$ is a doubly stochastic weight matrix associated with $G$. Then from Lemma \ref{fix_graph_consensus_rate} it is easy to see that for a given $\epsilon$ there is a $t_0(\epsilon)$ such that if $t > t_0(\epsilon)$ then the output of the Algorithm \ref{simple_consensus_Alg} satisfies $\|\vv_i^t - \bar{\vv} \| \leq \epsilon$ for all $i \in [n]$. This property of the algorithm can be used to have a local $\|\boldsymbol{\epsilon}_i^{r}\|$-approximation of $\vx_0^r$: 
\begin{align}\label{x_0_i_update_perturbed_with_g}
    \vx_{0,i}^{r+1} =\prox_{\frac{1}{n\beta}g}(\widetilde{\vx}^{r+1} + \veps_i^{r+1} ) \quad \forall i \in [n].
\end{align}
In the following, we present the Distributed ADMM Algorithm \ref{distributed_admm}. For simplicity of notation we denote the sequence $\{a_i\}_{i=1}^n$ by $\{a_i\}$. 
\begin{algorithm}
    \caption{Distributed ADMM}
    \KwData{$\beta, \{\veps_i^r\}_{r \in \NN}, G, n, W:=[w_{ij}], \{\vx_i^0, \vlambda_i^0\},\delta$}
    % \KwResult{$\{\vv_{0,i} \}_{i=1}^n$}
    
    \For {$r=0, 1, 2, \dots$}{
    \For {$i=1,\dots , n$}
    {   
        in parallel 
        
        %\While{\color{blue}\{\emph{equations \eqref{KKT1}, \eqref{KKT2}, and \eqref{KKT3} are satisfied up to a required accuracy\}}} {
        \While { $\max_{i\in[n]} \max \{ \|\nabla f_i(\vx_i^r) + \vlambda_i^r \|, \| \vs_{0,i}^r - n \widetilde{\vlambda}_i^r\|, \|\vx_i^r - \vx_{0,i}^r\|\} \geq \delta$ } {

        $(\widetilde{\vx}_i^{r+1} , \widetilde{\vlambda}_i^{r+1})\xleftarrow{\|\boldsymbol{\epsilon}_i^{r+1}\|-consensus}  \big((\{\vx_j^r\}_{j=1}^n , \{\vlambda_j^r\}_{j=1}^n), G, n, W\big)$,
        
        $\vy_{0,i}^{r+1} = \widetilde{\vx}_i^{r+1} + \widetilde{\vlambda}_i^{r+1}/\beta$

        $\vx_{0,i}^{r+1} \in \prox_{\frac{1}{n\beta}g}(\vy_{0,i}^{r+1})$
        
        $\vx_i^{r+1} \gets \argmin_{\vx_i\in \RR^{p}} f_i(\vx_i) + \langle \vx_i, \vlambda_i^r \rangle  + \frac{\beta}{2}\|\vx_i - \vx_{0,i}^{r+1} \|^2 $ 
        
        $\vlambda_i^{r+1} \gets \vlambda_i^{r} + \beta(\vx_i^{r+1} - \vx_{0,i}^{r+1})$
        }
    }
    }
    \label{distributed_admm}
\end{algorithm}
\begin{theorem}\label{main_theorem_sync}
    Assume that in iteration $r$ of Algorithm \ref{distributed_admm}, each agent stops the consensus algorithm after $t_r$ iterations such that $t_r \geq \frac{1+\zeta}{\log \rho^{-1}}\log{r}+ \frac{\log{c}}{\log{\rho^{-1}}}$ for some $\zeta >0$ and $\rho$ and $c$, as described in Lemma \ref{fix_graph_consensus_rate}. Then, under \ref{smotthnes_f_i}, \ref{proper_f_i}, \ref{g_X_assump}, and \ref{graph_connec_assump}, every limit point of the sequence $( \{\vx_i^r \}, \{\vlambda_i^r\}, \{\vx_{0, i}^r\})$ generated by the algorithm is a KKT point for the optimisation problem and satisfies the following
    \begin{subequations}
    \begin{align}
        &\nabla f_i(\vx_i^*) + \vlambda_i^* = 0 \quad \forall i\in[n], \label{KKT1}\\
        &\vs_{0,i}^* - \sum_{j=1}^n \vlambda_j^* = 0 \quad \forall i\in[n], \label{KKT2}\\
        &\vx_{0,i}^* =\vx_i^* = \vx_j^* \quad \forall i,j \in [n] \label{KKT3},
    \end{align}
    \end{subequations}
    where $\vs_{0,i}^* \in \partial g(\vx_{0,i}^*)$ ({see \cite{chang2016asynchronous}} and references therein for the KKT condition).
\end{theorem}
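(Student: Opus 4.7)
The plan is to analyze the first-order conditions of the per-iteration subproblems, pass to the limit along a convergent subsequence, and account for the consensus errors using the prescribed schedule for $t_r$. A crucial preliminary observation is that combining the first-order condition of the $\vx_i$-minimization, $\nabla f_i(\vx_i^{r+1}) + \vlambda_i^r + \beta(\vx_i^{r+1} - \vx_{0,i}^{r+1}) = 0$, with the dual update yields $\nabla f_i(\vx_i^{r+1}) + \vlambda_i^{r+1} = 0$ at \emph{every} iteration, so (\ref{KKT1}) is already exact along the sequence and transfers to the limit by continuity of $\nabla f_i$ (guaranteed by \ref{smotthnes_f_i}). A second preliminary is that the schedule $t_r \geq \frac{1+\zeta}{\log\rho^{-1}}\log r + \frac{\log c}{\log\rho^{-1}}$ combined with Lemma \ref{fix_graph_consensus_rate} gives $c\rho^{t_r} \leq r^{-(1+\zeta)}$, so $\sum_r c\rho^{t_r} < \infty$, rendering the consensus perturbations $\|\veps_i^r\|$ summable once the iterates are shown bounded.

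The core technical step is to construct an augmented-Lagrangian-type merit function, standard in nonconvex ADMM analyses and typically augmented with a term of the form $\frac{L^2}{\beta}\sum_i \|\vx_i^{r+1} - \vx_i^r\|^2$, and establish a sufficient-descent inequality perturbed by the consensus errors. Summability of the perturbations, combined with the lower boundedness of $f$ assumed in \ref{proper_f_i}, should then force $\sum_r \|\vlambda_i^{r+1} - \vlambda_i^r\|^2 < \infty$ along with analogous square-summability of successive differences of $\vx_i$ and $\vx_{0,i}$. In particular $\vlambda_i^{r+1} - \vlambda_i^r \to 0$, so the dual update $\vx_i^{r+1} - \vx_{0,i}^{r+1} = (\vlambda_i^{r+1} - \vlambda_i^r)/\beta$ yields $\vx_i^* = \vx_{0,i}^*$ in the limit. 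Cross-agent consensus $\vx_{0,i}^* = \vx_{0,j}^*$ then follows because the inputs $\vy_{0,i}^{r+1} = \widetilde{\vx}_i^{r+1} + \widetilde{\vlambda}_i^{r+1}/\beta$ to the prox step differ across agents only through the vanishing consensus errors, and $\prox_{g/(n\beta)}$ is nonexpansive by Lemma \ref{prox_properties_convex}(ii); jointly these give (\ref{KKT3}).

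For (\ref{KKT2}), Lemma \ref{prox_map_optimality_cond} applied to the $\vx_{0,i}$-update produces $\vs_{0,i}^{r+1} := n\beta(\vy_{0,i}^{r+1} - \vx_{0,i}^{r+1}) \in \partial g(\vx_{0,i}^{r+1})$. Once the consensus errors vanish, $\widetilde{\vx}_i^{r+1} \to n^{-1}\sum_j \vx_j^*$ and $\widetilde{\vlambda}_i^{r+1} \to n^{-1}\sum_j \vlambda_j^*$, and a direct computation exploiting the already-established $\vx_j^* = \vx_{0,i}^*$ collapses $n\beta(\vy_{0,i}^* - \vx_{0,i}^*)$ to $\sum_j \vlambda_j^*$, giving $\vs_{0,i}^* = \sum_j \vlambda_j^*$. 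The closedness of the graph of $\partial g$ recorded in Remark \ref{subdiff_properties} then certifies $\vs_{0,i}^* \in \partial g(\vx_{0,i}^*)$, completing the KKT characterisation.

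The chief obstacle is the Lyapunov analysis: unlike centralised nonconvex ADMM, the merit function must absorb per-iteration perturbations coming from the consensus errors, and these perturbations must combine summably so that an inexact sufficient-descent inequality still delivers square-summability of successive differences. The logarithmic schedule $t_r \sim \log r$ is precisely calibrated for this, but verifying the mechanics requires care; in particular one must avoid circularity in the boundedness argument, since the bound $\|\veps_i^r\| \leq c\rho^{t_r}\|(\{\vx_j^r\},\{\vlambda_j^r\}) - \text{avg}\|$ depends on the sizes of the very iterates whose boundedness is being sought, so a simultaneous inductive control of iterate norms and error summability is needed.
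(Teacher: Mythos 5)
Your outline follows essentially the same route as the paper: the exact identity $\nabla f_i(\vx_i^{r+1})+\vlambda_i^{r+1}=0$ for \eqref{KKT1}, summability of the consensus errors from the $\log r$ schedule, an augmented-Lagrangian descent inequality perturbed by those errors to get vanishing successive differences (the paper's Lemmas~\ref{aug-lag-bdd}--\ref{asymtotically_fixed_vars}), and then Lemma~\ref{prox_map_optimality_cond} plus closedness of $\partial g$ for \eqref{KKT2}--\eqref{KKT3}. Two small remarks: the ``circularity'' in the boundedness argument that you flag as the chief obstacle is not resolved in the paper either --- boundedness of the iterates is simply taken as hypothesis~(ii) of Lemma~\ref{asymtotically_fixed_vars}, so no inductive control is attempted; and when invoking Remark~\ref{subdiff_properties} you should note that it requires $g(\vx_{0,i}^{r_l})\to g(\vx_{0,i}^*)$, which the paper establishes via the subgradient inequality and lower semicontinuity of $g$ (for convex $g$ this is routine, but it is a step your sketch skips).
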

\begin{remark}

    If each agent stops the consensus algorithm after $t_r$ steps where $t_r$ satisfies the hypothesis of Theorem \ref{main_theorem_sync} then the sum $\sum_{r=0}^{\infty}\| \veps_i^{r+1}\|$ will be in form of $\sum_{r=0}^{\infty} \frac{1}{r^{1+\zeta}}$ since the value $\|\veps_i^r\|$ vanishes with $t_r$ exponentially. Thus, for all $i\in [n]$
    \[\sum_{r=0}^{\infty} \left(\|\vy_{0, i}^{r+1} - \widetilde{\vx}^{r+1} \|\right) = \sum_{r=0}^{\infty}\| \veps_i^{r+1}\|  < \infty.\]
    This, in turn, implies that the number of required communications is of order $\mathcal{O}(T\log{T})$ if the number of optimisation iterations is of order $\mathcal{O}(T)$.
\end{remark}

\begin{remark}
    The lower bound for $t_r$ depends on global information $\rho$ and $c$ that are a consequence of the underlying graph. In the case of fixed graphs, this information can be provided to all nodes before the start of the algorithm.
\end{remark}
\begin{remark}
    The stopping criterion of Algorithm~\ref{distributed_admm} corresponds to an approximate solution to the KKT system \eqref{KKT1}--\eqref{KKT3}. Note that $\nabla f_i(\vx_i^r) + \vlambda_i^r = 0$ for all $r$. Additionally, $\widetilde{\vlambda}_i^r$ is node $i$'s local estimate of $\bar{\vlambda}^r = \frac{1}{n}\sum_{i=1}^n \vlambda_i^r$ such that $\|\widetilde{\vlambda}_i^r - \bar{\vlambda}^r \| \leq \|\veps_i^r \|$, where $\|\veps_i^r\|$ is summable and therefore vanishing in $r$ for all $i\in [n]$. Finally, each node can readily check the last condition, $\|\vx_i^r - \vx_{0,i}^r\| <\delta$ as they have access to both $\vx_i^r$ and $\vx_{0,i}^r$. Implementation of the stopping criteria then would involve invoking a max-consensus algorithm that would terminate in at most $n$ steps. In a practical implementation, this condition needs only to be checked every $\bar{N}$ steps with ``a large enough'' $\bar{N}$ to reduce the communication overhead.
\end{remark}

\section{Proof of the main result} \label{proof}
The augmented-Lagrangian is defined as follows
\begin{align}
    \label{aug-lag-case2}
    &\cL \Big( \{\vx_i^r \}, \{\vlambda_i^r\}, \{\vx_{0, i}^r\} \Big) 
     = \sum_{i=1}^n \left(l_i^{\beta} (\vx_i^r, \vlambda_i^r, \vx_{0,i}^r)+\frac{1}{n}g(\vx_{0,i}^r)\right)
\end{align}
where $l_i^{\beta} (\vx_i^r, \vlambda_i^r, \vx_{0,i}^r) := f_i(\vx_i^r) + \langle \vlambda_i^r , \vx_i^r - \vx_{0,i}^r \rangle + \frac{\beta}{2} \|\vx_i^r - \vx_{0,i}^r \|^2$. 
We have the following important lemmas.
\begin{lemma}\label{aug-lag-bdd}
    Suppose \ref{smotthnes_f_i} and \ref{proper_f_i} hold. Then for every $\beta \geq L$, the augmented-Lagrangian defined in \eqref{aug-lag-case2} is lower bounded, i.e. there exists a scalar $\underline{\cL}$ such that $\cL^r \geq \underline{\cL}$.
\end{lemma}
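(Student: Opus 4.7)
My strategy is to eliminate the dual variables from $\cL^r$ by exploiting first-order optimality of the $\vx_i$-update, apply the Descent Lemma to lower-bound each $l_i^\beta$ by $f_i(\vx_{0,i}^r)$ plus a non-negative quadratic (using $\beta \geq L$), and then reduce the claim to the lower boundedness of $f$ supplied by \ref{proper_f_i}.

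Concretely, the $\vx_i$-update in Algorithm~\ref{distributed_admm} is an unconstrained smooth minimisation whose first-order condition reads $\nabla f_i(\vx_i^{r+1}) + \vlambda_i^r + \beta(\vx_i^{r+1} - \vx_{0,i}^{r+1}) = 0$. Combined with the dual step $\vlambda_i^{r+1} = \vlambda_i^r + \beta(\vx_i^{r+1} - \vx_{0,i}^{r+1})$, this delivers the identity $\vlambda_i^r = -\nabla f_i(\vx_i^r)$ for every $r \geq 1$ (the initial value $\cL^0$ is a finite constant determined by initialisation). Substituting this identity into $l_i^\beta$ and invoking Lemma~\ref{descent_lemma_1} on $f_i$ with the two points $\vx_i^r$ and $\vx_{0,i}^r$ yields
\[
 l_i^\beta(\vx_i^r, \vlambda_i^r, \vx_{0,i}^r) \geq f_i(\vx_{0,i}^r) + \frac{\beta - L}{2}\|\vx_i^r - \vx_{0,i}^r\|^2.
\]
Summing over $i$ and taking $\beta \geq L$ to keep the quadratic slack non-negative therefore reduces the claim to lower-bounding the residual
\[
 R^r := \sum_{i=1}^n f_i(\vx_{0,i}^r) + \tfrac{1}{n}\sum_{i=1}^n g(\vx_{0,i}^r).
\]

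The hard part will be this last reduction, because \ref{proper_f_i} only ensures $f = \sum_i f_i + g$ is lower bounded (and not each $f_i$ individually), while $R^r$ evaluates each $f_i$ at a possibly distinct $\vx_{0,i}^r$. My plan is to recenter around the centroid $\bar{\vx}_0^r := \tfrac{1}{n}\sum_i \vx_{0,i}^r$ by combining (i) Jensen's inequality for the convex $g$, giving $\tfrac{1}{n}\sum_i g(\vx_{0,i}^r) \geq g(\bar{\vx}_0^r)$, with (ii) the two-sided quadratic inequality for each $L$-smooth $f_i$,
\[
 f_i(\vx_{0,i}^r) \geq f_i(\bar{\vx}_0^r) + \langle \nabla f_i(\bar{\vx}_0^r), \vx_{0,i}^r - \bar{\vx}_0^r\rangle - \tfrac{L}{2}\|\vx_{0,i}^r - \bar{\vx}_0^r\|^2.
\]
Summing (ii) over $i$ and using $\sum_i(\vx_{0,i}^r - \bar{\vx}_0^r) = 0$, the cross term recenters against the mean-deviations $\nabla f_i(\bar{\vx}_0^r) - \tfrac{1}{n}\sum_j \nabla f_j(\bar{\vx}_0^r)$, which I would absorb, via Young's inequality and at the cost of taking $\beta$ sufficiently large relative to $L$, into the quadratic slack $\tfrac{\beta-L}{2}\sum_i \|\vx_i^r - \vx_{0,i}^r\|^2$ retained from the first step together with the new $\tfrac{L}{2}\sum_i\|\vx_{0,i}^r - \bar{\vx}_0^r\|^2$. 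The residual then collapses to $f(\bar{\vx}_0^r) \geq \underline{f}$, providing the desired constant $\underline{\cL}$.
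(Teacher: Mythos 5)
Your first half reproduces the paper's argument exactly: the identity $\vlambda_i^r = -\nabla f_i(\vx_i^r)$ obtained from the optimality condition of the $\vx_i$-update together with the dual step, followed by the Descent Lemma with $\beta \geq L$, arriving at $\cL^r \geq \sum_{i=1}^n \big(f_i(\vx_{0,i}^r) + n^{-1} g(\vx_{0,i}^r)\big) + \frac{\beta-L}{2}\sum_{i=1}^n\|\vx_i^r - \vx_{0,i}^r\|^2$. You are also right that the remaining step is the delicate one: \ref{proper_f_i} lower-bounds $f=\sum_i f_i+g$ at a \emph{single} point, whereas the residual evaluates each $f_i$ at its own $\vx_{0,i}^r$. (The paper's proof simply asserts at this point that the claim follows from \ref{proper_f_i}, implicitly treating the $\vx_{0,i}^r$ as coincident; a counterexample such as $f_1(x)=x$, $f_2(x)=-x+x^2$, $g=0$ shows that lower-boundedness of the sum at a common point does not bound $\sum_i f_i(\vy_i)$ over arbitrary tuples, so you have put your finger on a genuine gap in the published argument.)

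However, your proposed repair does not close. After Jensen and the lower quadratic bound you are left with $f(\bar{\vx}_0^r) + \sum_i\langle \vd_i^r, \vx_{0,i}^r-\bar{\vx}_0^r\rangle - \frac{L}{2}\sum_i\|\vx_{0,i}^r-\bar{\vx}_0^r\|^2$, where $\vd_i^r := \nabla f_i(\bar{\vx}_0^r)-\frac{1}{n}\sum_j\nabla f_j(\bar{\vx}_0^r)$, and three things go wrong with the proposed absorption. First, $\|\vx_{0,i}^r-\bar{\vx}_0^r\|^2$ measures disagreement among the auxiliary variables while the retained slack $\frac{\beta-L}{2}\|\vx_i^r-\vx_{0,i}^r\|^2$ measures the primal/auxiliary gap; neither quantity controls the other, so there is nothing for Young's inequality to absorb into. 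Second, under \ref{smotthnes_f_i} alone $\|\vd_i^r\|$ grows linearly in $\|\bar{\vx}_0^r\|$, so the Young remainder $\frac{1}{2\tau}\sum_i\|\vd_i^r\|^2$ is not a constant; without boundedness of the iterates (which is only hypothesised later, in Lemma \ref{asymtotically_fixed_vars}, not here) the cross term has no uniform lower bound. Third, taking $\beta$ ``sufficiently large relative to $L$'' proves a weaker statement than the claimed ``for every $\beta\geq L$''. A repair that has a chance of working exploits the algorithm's structure rather than $\beta$: by \eqref{x_0_i_update_perturbed_with_g} and nonexpansiveness of the proximal map, $\|\vx_{0,i}^r-\vx_0^r\|\leq\|\veps_i^r\|$, so the disagreement among the $\vx_{0,i}^r$ is controlled by the vanishing consensus errors --- but even then one needs a uniform bound on $\|\nabla f_i(\vx_0^r)\|$, i.e.\ boundedness of the iterates or a strengthening of \ref{proper_f_i} to each $f_i$ being individually lower bounded.
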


\begin{proof}
    The first-order optimality condition of the right-hand side of \eqref{x_i_update} yields
    \begin{align} 
    \label{optim-cond-primal}
    \nabla f_i(\vx_i^{r+1}) + \vlambda_i^{r+1} = 0.
    \end{align}
    From \eqref{aug-lag-case2} we have
    \begin{align*}
        &\cL \Big( \{\vx_i^r \}, \{\vlambda_i^r\}, \{\vx_{i, 0}^r\} \Big) \\
        &\stackrel{(a)}{=} 
        \sum_{i=1}^n \Big( f_i(\vx_i^r)+n^{-1}g(\vx_{0,i}^r) + \langle \nabla f_i (\vx_i^{r}) , \vx_{0,i}^r - \vx_i^r \rangle   \\
        & \quad  + \frac{\beta}{2} \|\vx_i^r - \vx_{0,i}^r \|^2 \Big)\\
        &\stackrel{(b)}{\geq} \sum_{i=1}^n \left(f_i(\vx_{0,i}^{r}) + n^{-1}g(\vx_{0,i}^r) + \frac{\beta - L}{2}\|\vx_i^r - \vx_{0,i}^r  \|^2 \right)
        % \\
        % &{\color{red} = (wrong)}f(\vx_{0,i}^r) + \frac{\beta - L}{2}\sum_{i=1}^n \|\vx_i^r - \vx_{0,i}^r  \|^2,
    \end{align*}
    where (a) is a consequence of \eqref{optim-cond-primal} and in (b), we have used the descent Lemma \ref{descent_lemma_1} and gradient Lipschitz continuity of $f_i$'s. The claim follows from the Assumption \ref{proper_f_i} and the fact that $\beta \geq L$.
\end{proof}

\begin{lemma}\label{aug-lag-consecutive-change_with_g}
    Suppose the assumptions \ref{smotthnes_f_i}, \ref{proper_f_i}, and \ref{g_X_assump} hold. For the sequence generated by the Distributed ADMM algorithm, if $\beta > L$, then $\forall\vt_{0, i}^{r+1} \in \partial g(\vx_{0, i}^{r+1})$, 
    \begin{align*}
            &\cL\Big( \{\vx_i^{r+1} \}, \{\vlambda_i^{r+1}\}, \{\vx_{0,i}^{r+1}\} \Big) - \cL\Big( \{\vx_i^{r} \}, \{\vlambda_i^r\}, \{\vx_{0, i}^r\} \Big) \\
            &\leq \sum_{i=1}^n -\alpha \|\vx_i^{r+1} - \vx_i^r \|^2 - \sum_{i=1}^n \frac{\beta}{2}\| \vx_{0,i}^{r+1} - \vx_{0,i}^r \|^2 \\
            &+ \sum_{i=1}^n \langle \vlambda_i^r - \beta (\vx_{0,i}^{r+1} - \vx_i^r)-n^{-1}\vt_{0, i}^{r+1},\vx_{0,i}^{r} - \vx_{0,i}^{r+1} \rangle,
    \end{align*}
    
    where $\alpha:=\left( \frac{\beta - L}{2} - \frac{L^2}{\beta} \right)$.
\end{lemma}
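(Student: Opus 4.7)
The plan is the standard ``three-block'' telescoping of the one-step change,
\[
\cL^{r+1}-\cL^r \;=\; \Delta_{\vlambda} + \Delta_{\vx} + \Delta_{\vx_0},
\]
where $\Delta_{\vlambda}:=\cL(\{\vx_i^{r+1}\},\{\vlambda_i^{r+1}\},\{\vx_{0,i}^{r+1}\})-\cL(\{\vx_i^{r+1}\},\{\vlambda_i^{r}\},\{\vx_{0,i}^{r+1}\})$, $\Delta_{\vx}:=\cL(\{\vx_i^{r+1}\},\{\vlambda_i^{r}\},\{\vx_{0,i}^{r+1}\})-\cL(\{\vx_i^{r}\},\{\vlambda_i^{r}\},\{\vx_{0,i}^{r+1}\})$, and $\Delta_{\vx_0}:=\cL(\{\vx_i^{r}\},\{\vlambda_i^{r}\},\{\vx_{0,i}^{r+1}\})-\cL(\{\vx_i^{r}\},\{\vlambda_i^{r}\},\{\vx_{0,i}^{r}\})$. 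I will upper-bound each piece and add.

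For $\Delta_{\vlambda}$, the augmented Lagrangian is linear in each $\vlambda_i$ with slope $\vx_i^{r+1}-\vx_{0,i}^{r+1}$, so $\Delta_{\vlambda}=\sum_i\langle\vlambda_i^{r+1}-\vlambda_i^r,\vx_i^{r+1}-\vx_{0,i}^{r+1}\rangle$. Substituting the dual update \eqref{dual_update} replaces the second factor by $\beta^{-1}(\vlambda_i^{r+1}-\vlambda_i^r)$, giving $\Delta_{\vlambda}=\sum_i\beta^{-1}\|\vlambda_i^{r+1}-\vlambda_i^r\|^2$. Invoking the primal optimality relation \eqref{optim-cond-primal} at iterations $r$ and $r{+}1$ converts this into a gradient difference, which by Lipschitz-smoothness of $\nabla f_i$ (Assumption \ref{smotthnes_f_i}) is bounded by $\sum_i(L^2/\beta)\|\vx_i^{r+1}-\vx_i^r\|^2$. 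For $\Delta_{\vx}$, observe that when $\beta>L$ the map $\vx_i\mapsto l_i^{\beta}(\vx_i,\vlambda_i^r,\vx_{0,i}^{r+1})$ is $(\beta-L)$-strongly convex, since $f_i+(L/2)\|\cdot\|^2$ is convex for any $L$-smooth $f_i$ and the definition of $l_i^\beta$ contributes an extra $(\beta-L)/2$ quadratic in $\vx_i$. Because $\vx_i^{r+1}$ is the \emph{exact} minimizer by \eqref{x_i_update}, strong convexity yields $\Delta_{\vx}\le-\sum_i\frac{\beta-L}{2}\|\vx_i^{r+1}-\vx_i^r\|^2$. Combined with the bound on $\Delta_\vlambda$, this produces the coefficient $-\alpha=L^2/\beta-(\beta-L)/2$ in front of $\|\vx_i^{r+1}-\vx_i^r\|^2$.

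The main obstacle is $\Delta_{\vx_0}$, and this is where the analysis departs from centralised ADMM: in Algorithm~\ref{distributed_admm}, $\vx_{0,i}^{r+1}$ is \emph{not} a minimizer of $l_i^{\beta}(\vx_i^{r},\vlambda_i^{r},\cdot)+n^{-1}g(\cdot)$; it is a prox evaluated at the consensus-perturbed input $\vy_{0,i}^{r+1}$, so the residual gradient is nonzero and cannot be dropped. My approach is to split $\Delta_{\vx_0}$ into an $l_i^\beta$-part and an $n^{-1}g$-part and apply inequalities that do not presume optimality. The $l_i^\beta$-part is a strictly convex quadratic in $\vx_{0,i}$ with Hessian $\beta I$; the $\beta$-strong-convexity inequality centred at $\vx_{0,i}^{r+1}$, whose gradient there is $-\vlambda_i^r+\beta(\vx_{0,i}^{r+1}-\vx_i^r)$, upper-bounds this part by $\langle\vlambda_i^r-\beta(\vx_{0,i}^{r+1}-\vx_i^r),\vx_{0,i}^r-\vx_{0,i}^{r+1}\rangle-\frac{\beta}{2}\|\vx_{0,i}^{r+1}-\vx_{0,i}^r\|^2$. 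The $n^{-1}g$-part is handled by convexity of $g$: for any $\vt_{0,i}^{r+1}\in\partial g(\vx_{0,i}^{r+1})$, $n^{-1}[g(\vx_{0,i}^{r+1})-g(\vx_{0,i}^r)]\le\langle -n^{-1}\vt_{0,i}^{r+1},\,\vx_{0,i}^r-\vx_{0,i}^{r+1}\rangle$. Adding these two contributions reproduces exactly the cross term in the statement; summing $\Delta_\vlambda+\Delta_\vx+\Delta_{\vx_0}$ and collecting coefficients delivers the claim.
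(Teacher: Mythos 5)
Your proposal is correct and follows essentially the same route as the paper: the identical three-block decomposition, the same dual-update-plus-Lipschitz bound of $L^2/\beta$ on the $\vlambda$-block, and the same $(\beta-L)$-strong-convexity argument at the exact minimiser for the $\vx$-block. For the $\vx_0$-block the paper applies $\beta$-strong convexity once to the combined function $\ell_0^{\beta}=n^{-1}g+\langle\vlambda_i^r,\vx_i^r-\cdot\rangle+\tfrac{\beta}{2}\|\cdot-\vx_i^r\|^2$, whereas you expand the quadratic exactly and invoke the subgradient inequality for $g$ separately; these are the same computation, and your explicit remark that $\vx_{0,i}^{r+1}$ is not a minimiser of this block (so the cross term cannot be discarded) correctly identifies why the inequality retains that term.
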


\begin{proof}
    We first split the successive difference of the augmented Lagrangian into three simpler-to-analyse terms. 
\begin{subequations}
    \label{aug-lag-decomp}
\begin{align*}
    &\cL\Big( \{\vx_i^{r+1} \}, \{\vlambda_i^{r+1}\}, \{\vx_{0,i}^{r+1}\} \Big) - \cL\Big( \{\vx_i^{r} \}, \{\vlambda_i^r\}, \{\vx_{0, i}^r\} \Big) \\
    &=\underbrace{\cL\Big( \{\vx_i^{r+1} \}, \{\vlambda_i^{r+1}\}, \{\vx_{0, i}^{r+1}\} \Big) - \cL\Big( \{\vx_i^{r+1} \}, \{\vlambda_i^{r}\}, \{\vx_{0, i}^{r+1}\} \Big)}_{\spadesuit} \\
    &+ \underbrace{\cL\Big( \{\vx_i^{r+1} \}, \{\vlambda_i^{r}\}_, \{\vx_{0, i}^{r+1}\} \Big) - \cL\Big( \{\vx_i^{r} \}, \{\vlambda_i^{r}\}, \{\vx_{0, i}^{r+1}\} \Big)}_{\clubsuit} \\
    &+ \underbrace{\cL\Big( \{\vx_i^{r} \}, \{\vlambda_i^{r}\}, \{\vx_{0, i}^{r+1}\} \Big) - \cL\Big( \{\vx_i^{r} \}, \{\vlambda_i^{r}\}, \{\vx_{0, i}^{r}\} \Big)}_{\blacklozenge}
\end{align*}
\end{subequations}
For the first term, using \eqref{aug-lag-case2} we obtain 
    \begin{align}\label{term1_ineq}
        \spadesuit &= \sum_{i=1}^n \langle \vlambda_i^{r+1} - \vlambda_i^{r}, \vx_i^{r+1} - \vx_{0,i}^{r+1} \rangle \nonumber\\
        &\stackrel{(a)}{=} \sum_{i=1}^n \frac{1}{\beta} \|\vlambda_i^{r+1} - \vlambda_i^{r} \|^2 \stackrel{(b)}{\leq} \sum_{i=1}^n \frac{L^2}{\beta} \|\vx_i^{r+1} - \vx_i^r \|^2
    \end{align}
    where in (a), we have used optimality condition of \eqref{x_i_update} and \eqref{dual_update} and in (b), we have used gradient Lipschitzness of $f_i$'s with parameter $L$.
    
        Similarly, using \eqref{aug-lag-case2} for the second term we obtain
    \begin{align}\label{term2_ineq}
        &\clubsuit = \nonumber \\
        &\sum_{i=1}^n \left( f_i(\vx_i^{r+1}) + \langle 
        \vlambda_i^{r} , \vx_i^{r+1} - \vx_{0,i}^{r+1}\rangle + \frac{\beta}{2} \|\vx_i^{r+1} - \vx_{0,i}^{r+1} \|^2\right) \nonumber\\
        &- \sum_{i=1}^n \left( f_i(\vx_i^{r}) + \langle 
        \vlambda_i^{r} , \vx_i^{r} - \vx_{0,i}^{r+1}\rangle + \frac{\beta}{2} \|\vx_i^{r} - \vx_{0,i}^{r+1} \|^2\right) \nonumber\\
        &\stackrel{(a)}{\leq} \sum_{i=1}^n -\frac{\gamma}{2} \|\vx_i^{r+1} - \vx_i^r\|,
    \end{align}
    where in (a) we have used the following inequality
    \begin{align*}
        &l_i^{\beta} (\vx_i^{r+1}, \vlambda_i^{r}, \vx_{0,i}^{r+1}) - l_i^{\beta} (\vx_i^r, \vlambda_i^r, \vx_{0,i}^{r+1}) \\
        & \leq \langle 
        \nabla_{\vx_i} l_i^{\beta}(\vx_i^{r+1}, \vlambda_i^r, \vx_{0,i}^{r+1}), \vx_i^{r+1} - \vx_i^r \rangle - \frac{\gamma}{2} \| \vx_i^{r+1} - \vx_i^r \|^2.
    \end{align*}
    This inequality is a consequence of the strong convexity of the function $l_i^{\beta} (\vx_i, \vlambda_i, \vx_{0,i})$ with respect to its first argument with parameter $\gamma = \beta - L$ for $\beta > L$, and due to the optimality condition on \eqref{x_i_update} which in turn leads to $\nabla_{\vx_i} l_i^{\beta}(\vx_i^{r+1}, \vlambda_i^r, \vx_{0,i}^{r+1}) = 0$. 

    For the last term, we have
    \begin{align}\label{term3_ineq}
        &\blacklozenge = \cL\Big( \{\vx_i^{r} \}, \{\vlambda_i^{r}\}, \{\vx_{0, i}^{r+1}\} \Big) - \cL\Big( \{\vx_i^{r} \}, \{\vlambda_i^{r}\}, \{\vx_{0, i}^{r}\} \Big)\nonumber \\
        &= \sum_{i=1}^n \left( n^{-1}g(\vx_{0,i}^{r+1}) + \langle \vlambda_i^r, \vx_i^r - \vx_{0,i}^{r+1} \rangle +\frac{\beta}{2} \|\vx_i^r - \vx_{0,i}^{r+1} \|^2\right) \nonumber \\
            &- \sum_{i=1}^n \left( n^{-1}g(\vx_{0,i}^{r}) +\langle \vlambda_i^r, \vx_i^r - \vx_{0,i}^{r} \rangle +\frac{\beta}{2} \|\vx_i^r - \vx_{0,i}^{r} \|^2 \right)\nonumber\\
            &\stackrel{(a)}{\leq}\sum_{i=1}^n \left( -\langle n^{-1}\vt_{0,i}^{r+1}-\vlambda_i^r - \beta (\vx_i^r - \vx_{0,i}^{r+1}),\  \vx_{0,i}^r - \vx_{0,i}^{r+1} \rangle \right. \nonumber\\
            &\quad - \left. \frac{\beta}{2} \|\vx_{0,i}^r - \vx_{0,i}^{r+1} \|^2\right).
    \end{align}
    In (a) we have used the fact that the function $\ell_0^{\beta}(\vx) := n^{-1}g(\vx) + \langle \vlambda_i^r, \ \vx_i^r - \vx\rangle + \frac{\beta}{2} \|\vx - \vx_{i}^r \|^2$ is $\beta$-strongly convex due to the convexity of $g$. The result follows from combining \eqref{term1_ineq}, \eqref{term2_ineq}, and \eqref{term3_ineq}.
\end{proof}

\begin{lemma} \label{asymtotically_fixed_vars}
    For the Distributed ADMM algorithm \ref{distributed_admm}. If
    \begin{enumerate}[(i)]
        \item assumptions \ref{smotthnes_f_i}, \ref{proper_f_i} and \ref{g_X_assump} hold;
        \item sequence $( \{\vx_i^r \}_{i=1}^n, \{\vlambda_i^r\}_{i=1}^n, \{\vx_{0, i}^r\}_{i=1}^n)$ is bounded;
        \item the number of consensus steps in iteration $r$ is in order $\cO(\log{r})$ (see Theorem \ref{main_theorem_sync}); and
        \item the penalty parameter $\beta$ satisfies $\frac{\beta - L}{2} - \frac{L^2}{\beta}>0$,
    \end{enumerate}
    then
    \begin{subequations}
    \begin{align}
        &\vx_i^{r+1} - \vx_i^r \to 0, \label{case3-cvg1}\\
        &\vx_{0,i}^{r+1} - \vx_{0,i}^{r} \to 0, \label{case3-cvg2}\\
        &\vlambda_i^{r+1} - \vlambda_i^r \to 0, \label{case3-cvg3}\\
        &\vx_{0,i}^{r} - \vx_i^r \to 0 \label{case3-cvg4}.
    \end{align}
    \end{subequations}
    and the set of its limit points, denoted by $\Lambda^*$, is nonempty and compact and the sequence approaches $\Lambda^*$ as $r \to \infty$.
\end{lemma}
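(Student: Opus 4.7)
The strategy is to use the augmented-Lagrangian sequence $\cL^r := \cL(\{\vx_i^r\}, \{\vlambda_i^r\}, \{\vx_{0,i}^r\})$ as a Lyapunov function, combining the lower bound from Lemma \ref{aug-lag-bdd} with the near-descent inequality of Lemma \ref{aug-lag-consecutive-change_with_g}. Under hypothesis (iv), the coefficient $\alpha = (\beta-L)/2 - L^2/\beta$ is strictly positive. If I can show that the cross term in that inequality contributes a summable residual $\delta_r$, telescoping $\cL^{r+1}-\cL^r$ between a finite initial value and $\underline{\cL}$ will yield $\sum_r \sum_i \|\vx_i^{r+1}-\vx_i^r\|^2 + \sum_r \sum_i \|\vx_{0,i}^{r+1}-\vx_{0,i}^r\|^2 < \infty$, immediately establishing \eqref{case3-cvg1} and \eqref{case3-cvg2}.

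The central obstacle is controlling the cross term in Lemma \ref{aug-lag-consecutive-change_with_g}, which is where the consensus error enters. I would specialise $\vt_{0,i}^{r+1}\in \partial g(\vx_{0,i}^{r+1})$ to the prox-induced subgradient $n^{-1}\vt_{0,i}^{r+1} = \beta(\vy_{0,i}^{r+1}-\vx_{0,i}^{r+1})$ furnished by Lemma \ref{prox_map_optimality_cond}. Substituting $\vy_{0,i}^{r+1}=\widetilde{\vx}_i^{r+1}+\widetilde{\vlambda}_i^{r+1}/\beta$ causes the $\vx_{0,i}^{r+1}$ and $\vx_i^r$ terms to cancel, collapsing the cross term to $\sum_i \langle (\vlambda_i^r-\widetilde{\vlambda}_i^{r+1})+\beta(\vx_i^r-\widetilde{\vx}_i^{r+1}),\, \vx_{0,i}^r-\vx_{0,i}^{r+1}\rangle$. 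I would then split the left factor as $A_i^r + B_i^{r+1}$, where $A_i^r := (\vlambda_i^r-\bar{\vlambda}^r)+\beta(\vx_i^r-\bar{\vx}^r)$ is the ``centring'' piece and $B_i^{r+1}$ gathers the consensus estimation errors, satisfying $\|B_i^{r+1}\|=O(\|\veps_i^{r+1}\|)$. The $B$-contribution is handled by Young's inequality: a small fraction of $\|\vx_{0,i}^{r+1}-\vx_{0,i}^r\|^2$ is absorbed into the descent, leaving an $O(\|\veps_i^{r+1}\|^2)$ remainder. The $A$-contribution is the tricky part, since $\|A_i^r\|$ is only bounded (by (ii)), not small; here I would exploit the identity $\sum_i A_i^r = 0$ to rewrite $\sum_i\langle A_i^r, \vx_{0,i}^\star\rangle$ in terms of the centred differences $\vx_{0,i}^\star - \bar{\vx}_0^\star$ with $\bar{\vx}_0^\star := n^{-1}\sum_i \vx_{0,i}^\star$, and invoke nonexpansiveness of the prox (Lemma \ref{prox_properties_convex}) to bound $\|\vx_{0,i}^r-\bar{\vx}_0^r\|$ by the spread of $\{\vy_{0,j}^r\}_j$, which is itself $O(\|\veps^r\|)$. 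The growth rate of $t_r$ assumed in Theorem \ref{main_theorem_sync} yields $\sum_r \|\veps_i^r\|<\infty$ (hence $\sum_r\|\veps_i^r\|^2<\infty$), so the accumulated residual $\sum_r\delta_r$ is finite, as required.

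Once \eqref{case3-cvg1} and \eqref{case3-cvg2} are in hand, the remaining conclusions are short. From the primal optimality \eqref{optim-cond-primal}, $\vlambda_i^r = -\nabla f_i(\vx_i^r)$, so the Lipschitz property of $\nabla f_i$ together with \eqref{case3-cvg1} yields \eqref{case3-cvg3}. Rewriting the dual update \eqref{dual_update} as $\vx_i^{r+1}-\vx_{0,i}^{r+1}=\beta^{-1}(\vlambda_i^{r+1}-\vlambda_i^r)$ then gives \eqref{case3-cvg4}. For the limit-point statement, assumption (ii) together with Bolzano--Weierstrass gives $\Lambda^*\neq\emptyset$; $\Lambda^*$ is closed by a routine diagonal-subsequence argument and bounded since it lies in the closure of the iterate set, hence compact. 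That the sequence approaches $\Lambda^*$ follows by contradiction: any subsequence staying a fixed positive distance from $\Lambda^*$ would, by boundedness, admit a convergent sub-subsequence whose limit necessarily belongs to $\Lambda^*$.
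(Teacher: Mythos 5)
Your proposal is correct and follows the paper's overall strategy: use the augmented Lagrangian as a Lyapunov function, invoke Lemma~\ref{aug-lag-bdd} for the lower bound and Lemma~\ref{aug-lag-consecutive-change_with_g} for the near-descent inequality, show the cross term contributes only a summable residual, telescope, and then derive \eqref{case3-cvg3} from \eqref{optim-cond-primal} with Lipschitz continuity and \eqref{case3-cvg4} from \eqref{dual_update}. Where you genuinely diverge is in how the cross term is decomposed. The paper introduces the fictitious ``exact'' iterate $\vx_0^{r+1}=\prox_{\frac{1}{n\beta}g}(\widetilde{\vx}^{r+1})$ and the deviations $\veta_i^{r+1}:=\vx_{0,i}^{r+1}-\vx_0^{r+1}$ (controlled by $\|\veta_i^{r+1}\|\leq\|\veps_i^{r+1}\|$ via prox nonexpansiveness), then splits the cross term into three pieces $\cK_1,\cK_2,\cK_3$: the key cancellation happens in $\cK_1$, where summing the left factor over $i$ against the \emph{common} direction $\vx_0^r-\vx_0^{r+1}$ leaves only $\beta\sum_i(\veta_i^{r+1}-\veps_i^{r+1})$, while $\cK_2,\cK_3$ carry the $\veta$-corrections. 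You instead first collapse the left factor to $(\vlambda_i^r-\widetilde{\vlambda}_i^{r+1})+\beta(\vx_i^r-\widetilde{\vx}_i^{r+1})$ by choosing the prox-induced subgradient, then centre it as $A_i^r+B_i^{r+1}$ with $\sum_i A_i^r=0$ and $\|B_i^{r+1}\|=O(\|\veps_i^{r+1}\|)$, and exploit the zero-sum property of $A_i^r$ together with the prox-nonexpansiveness bound on the spread $\|\vx_{0,i}^r-\bar{\vx}_0^r\|=O(\max_j\|\veps_j^r\|)$. The underlying cancellation is the same in both arguments (the sum over $i$ of the left factor vanishes up to consensus-error-sized terms), but your bookkeeping avoids introducing the auxiliary exact iterate and makes it more transparent that boundedness of the iterates (hypothesis (ii)) is what lets the bounded-times-summable terms be absorbed; the paper's version keeps all error terms expressed uniformly through $\veps_i^r$ and $\veta_i^r$. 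Both yield the required summable residual under the $t_r=\cO(\log r)$ consensus schedule, and your handling of the remaining claims (limit-point set nonempty, compact, attracting) is the standard argument the paper leaves implicit.
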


\begin{proof}\label{proof_perturbed_with_g}
    Let $\alpha:=(\frac{\beta - L}{2} - \frac{L^2}{\beta})$. From Lemma \ref{aug-lag-consecutive-change_with_g} we have
    \begin{align}\label{aug_conseq_with_g_zeta}
        &\cL^{r+1} - \cL^r \nonumber \leq \sum_{i=1}^n \left( -\alpha \|\vx_i^{r+1} - \vx_i^r \|^2 -  \frac{\beta}{2}\| \vx_{0 ,i}^{r+1} - \vx_{0,i}^r \|^2\right) \nonumber \\
            &\qquad +  \sum_{i=1}^n\underbrace{\langle \vlambda_i^r - \beta (\vx_{0,i}^{r+1} - \vx_i^r) - n^{-1} \vt_{0,i}^{r+1},\ \vx_{0,i}^{r} - \vx_{0,i}^{r+1} \rangle}_{\zeta_i^r} .
    \end{align}
    Let $\zeta^r := \sum_{i=1}^n \zeta_i^r$. Using \eqref{x_0_i_update_perturbed_with_g}, define $\veta_i^{r+1} := \vx_{0,i}^{r+1} - \vx_0^{r+1}$. Hence, $\veta_i^{r+1} =  \prox_{\frac{1}{n\beta}g}(\widetilde{\vx}^{r+1} + \veps_i^{r+1} ) - \prox_{\frac{1}{n\beta}g}(\widetilde{\vx}^{r+1} )$.
    Note that from non-expansivity of proximal operator (see Lemma \ref{prox_properties_convex}), we infer that $\|\veta_i^{r+1} \| \leq \| \veps_i^{r+1} \|$. Using the equation above, we can write
    \begin{align*}
        \zeta^r &= \sum_{i=1}^n \langle \vlambda_i^r - \beta \vx_{0}^{r+1} {-\beta \veta_i^{r+1}}+ \beta \vx_i^r - n^{-1} \vt_{0,i}^{r+1}, \\  &\qquad \vx_{0}^{r} - \vx_{0}^{r+1} + {\veta_i^{r} - \veta_i^{r+1}\rangle} \\
        & = {\sum_{i=1}^n \langle \vlambda_i^r - \beta \vx_{0}^{r+1} + \beta \vx_i^r - n^{-1} \vt_{0,i}^{r+1},\ {\vx_{0}^{r} - \vx_{0}^{r+1}}\rangle} \\
        &+ {\sum_{i=1}^n \langle \vlambda_i^r - \beta \vx_{0}^{r+1} + \beta \vx_i^r - n^{-1} \vt_{0,i}^{r+1},\  \veta_i^{r} - \veta_i^{r+1}\rangle}\\
        &\quad + {\sum_{i=1}^n \langle  {{-\beta \veta_i^{r+1}}},\  \vx_{0,i}^{r} - \vx_{0,i}^{r+1}\rangle}.
    \end{align*}
    Lets define the three summation terms in the right hand side of the last equality above as $\cK_1$, $\cK_2$, and $\cK_3$, respectively. Next, we bound the values of $\cK_2, \cK_2$, and $\cK_3$. Starting with $\cK_1$, applying Lemma \ref{prox_map_optimality_cond} to the equation \eqref{x_0_i_update_perturbed_with_g}, it follows
    \begin{align*}
        \widetilde{\vx}^{r+1} + \veps_i^r - \vx_{0,i}^{r+1} \in (n\beta)^{-1} \partial g(\vx_{0,i}^{r+1}).
    \end{align*}
    Let $\vt_{0,i}^{r+1} = n\beta \left[ \widetilde{\vx}^{r+1} + \veps_i^{r+1} - \vx_{0,i}^{r+1} \right]$. Thus,
    \vspace*{-.2cm}
    \begin{align*}
        \cK_1 &=  \langle \sum_{i=1}^n \left( \vlambda_i^r - \beta \vx_{0}^{r+1} + \beta \vx_i^r \right. \\
        & \left. - \beta [ \widetilde{\vx}^{r+1} + \veps_i^{r+1} - \vx_{0,i}^{r+1} ] \right),\ \vx_{0}^{r} - \vx_{0}^{r+1}\rangle\\
        & = \langle \beta\sum_{i=1}^n (\vx_{0,i}^{r+1} - \vx_0^{r+1}- \veps_i^{r+1}), \vx_{0}^{r} - \vx_{0}^{r+1} \rangle \\
        &= \beta\langle\sum_{i=1}^n (\veta_i^{r+1} - \veps_i^{r+1}), \vx_{0}^{r} - \vx_{0}^{r+1} \rangle \\
        &\stackrel{(a)}{\leq} 2\beta \| \vx_{0}^{r} - \vx_{0}^{r+1} \| \sum_{i=1}^n \|\veps_i^{r+1} \|
    \end{align*}
    where in (a) we have used the Cauchy-Schwarz inequality and the fact that $\|\veta_i^{r+1} \| \leq \| \veps_i^{r+1} \|$ which implies that $\| \veta_i^{r+1} - \veps_i^{r+1}\| \leq 2 \| \veps_i^{r+1} \|$.
    
    For $\cK_2$ we have\\
     \[    \cK_2 \leq \sum_{i=1}^n\!\! \left( \| \vlambda_i^r - \beta \vx_{0}^{r+1} + \beta \vx_i^r - n^{-1} \vt_{0,i}^{r+1}\| \| \veta_i^{r} - \veta_i^{r+1} \|\right).\]
     
    Using Young's inequality for $\cK_3$ one can write that\\
   $
        \cK_3 \leq \beta \sum_{i=1}^n \|\veta_i^{r+1} - \veta_i^r \|^2 + \frac{\beta}{4} \sum_{i=1}^n \|\vx_{0,i}^{r} - \vx_{0,i}^{r+1} \|^2.
   $   
    Summing up the inequalities above for $\cK_1, \cK_2$, and $\cK_3$ and using \eqref{aug_conseq_with_g_zeta} we have
    \begin{align*}
        &\cL^{r+1} - \cL^r \leq \\ &\sum_{i=1}^n \left( -\alpha \|\vx_i^{r+1} - \vx_i^r \|^2 - \frac{\beta}{4}\| \vx_{0,i}^{r+1} - \vx_{0,i}^r \|^2 \right) + \sum_{i=1}^n \gamma_i^r.
    \end{align*}
    where $\gamma_i^r := 2\beta \| \vx_{0}^{r} - \vx_{0}^{r+1} \|  \|\veps_i^{r+1} \| + \| \vlambda_i^r - \beta \vx_{0}^{r+1} + \beta \vx_i^r - n^{-1} \vt_{0,i}^{r+1}\| \| \veta_i^{r} - \veta_i^{r+1} \|
        + \beta \|\veta_i^{r+1} - \veta_i^r \|^2$.
    By telescoping the inequality above from $r=0$ to $r=T-1$ we obtain
    \begin{align*}
        &\cL^T - \cL^0  \leq \sum_{r=0}^{T-1} \sum_{i=1}^n \left( -\alpha \|\vx_i^{r+1} - \vx_i^r \|^2  \right. \\
        &\quad - \left. \frac{\beta}{4}\| \vx_{0,i}^{r+1} - \vx_{0,i}^r \|^2\right) + \sum_{r=0}^{T-1} \sum_{i=1}^n \gamma_i^r.
    \end{align*}
    
    From the boundedness of the sequences and summability of $\veps_i^{r}$ we can see that $\gamma_i^r$ is summable for all $i\in [n]$, i.e. $\lim_{T\to \infty}\sum_{r=0}^{T-1}\gamma_i^r = \gamma_i < \infty$, by taking limit inferior of the equation above as $T\to \infty$ and using lower boundedness of $\cL$ (Lemma \ref{aug-lag-bdd}) one has
    \begin{align*}
        &\underline{\cL} - \cL^0- \sum_{i=1}^n \gamma_i \leq \liminf_{T \to \infty} \cL^T - \cL^0 - \sum_{i=1}^n \gamma_i \\
        &\leq \liminf_{T \to \infty} \sum_{r=0}^{T-1} \sum_{i=1}^n \left( -\alpha \|\vx_i^{r+1} - \vx_i^r \|^2  - \frac{\beta}{4}\| \vx_{0,i}^{r+1} - \vx_{0,i}^r \|^2\right) \\
        & = -\limsup_{T \to \infty} \sum_{r=0}^{T-1} \sum_{i=1}^n \left( \alpha \|\vx_i^{r+1} - \vx_i^r \|^2 +  \frac{\beta}{4}\| \vx_{0,i}^{r+1} - \vx_{0,i}^r \|^2\right)
    \end{align*}
    which implies
    \begin{align*}
        \limsup_{T \to \infty} \sum_{r=0}^{T-1} \sum_{i=1}^n &\left( \alpha \|\vx_i^{r+1} - \vx_i^r \|^2 + \frac{\beta}{4}\| \vx_{0,i}^{r+1} - \vx_{0,i}^r \|^2\right) \\
        &\leq \cL^0 - \underline{\cL} +\sum_{i=1}^n \gamma_i < \infty.
    \end{align*}
    Therefore, if $\beta$ is chosen such that $\alpha := \frac{\beta - L}{2} - \frac{L^2}{\beta} > 0$, we can conclude that $\lim_{r\to \infty} \| \vx_i^{r+1} - \vx_i^r \| = 0$ and $\lim_{r \to \infty} \| \vx_{0,i}^{r+1} - \vx_{0,i}^r\| = 0$. The optimality condition, \eqref{optim-cond-primal}, the Lipschitz continuity of $f_i$'s, and \eqref{case3-cvg1}, yield \eqref{case3-cvg3} and the dual update \eqref{dual_update} leads to \eqref{case3-cvg4}.
\end{proof}
Now we can prove the main result.
\subsubsection*{Proof of Theorem \ref{main_theorem_sync}} 
    Let $( \{\vx_i^* \}_{i=1}^n, \{\vlambda_i^*\}_{i=1}^n, \{\vx_{0, i}^*\}_{i=1}^n)$ be a limit point of the sequence to which the subsequence $( \{\vx_i^{r_l} \}_{i=1}^n, \{\vlambda_i^{r_l}\}_{i=1}^n, \{\vx_{0, i}^{r_l}\}_{i=1}^n)$ converges as $l\to \infty$.  

    Applying optimality condition on the primal update step \eqref{x_i_update}, gives $\nabla f_i(\vx_i^{r+1}) + \vlambda_i^{r+1} = 0$. Taking limit from both sides over the subsequence, using Lemma \ref{asymtotically_fixed_vars} and Lipschitz continuity of $\nabla f_i$, one can prove \eqref{KKT1}.
    Taking limits from both sides of \eqref{dual_update} over the subsequence and using \eqref{case3-cvg3} in Lemma \ref{asymtotically_fixed_vars} results in $\vx_{0,i}^* = \vx_i^*$ for all $i\in [n]$. Lemma \ref{prox_map_optimality_cond} applied to \eqref{x_0_i_update_perturbed_with_g} yields
    \begin{align*}
       \widetilde{\vx}^{r+1} +\veps_i^{r+1}- \vx_{0,i}^{r+1} \in (n\beta)^{-1} \partial g(\vx_{0,i}^{r+1}), \quad \forall i\in[n].
    \end{align*}
    Thus, there exists $\vs_{0,i}^{r+1} \in \partial g(\vx_{0,i}^{r+1})$ such that $\vs_{0,i}^{r+1} = \sum_{j=1}^n \vlambda_j^r + \beta (\sum_{j=1}^n \vx_{j}^r -n\vx_{0,i}^r) + n\beta \veps_i^{r+1}$. Passing limit over the subsequence results in that $\vs_{0,i}^{r_l} \to \sum_{j=1}^n \vlambda_j^*$ as $l\to \infty$. To prove $\sum_{i=1}^n \vlambda_i^* \in \partial g(\vx_{0,i}^*) $ we need to prove that $g(\vx_{0,i}^{r_l})\to g(\vx_{0,i}^*)$ and then use Remark \ref{subdiff_properties}. From the convexity of $g$, we can write 
    \begin{align*}
        g(\vx_{0,i}^*) \geq g(\vx_{0,i}^{r+1}) + \langle \vs_{0,i}^{r+1} , \vx_{0,i}^* - \vx_{0,i}^{r+1}\rangle.
    \end{align*}
    Taking limsup from both sides of the inequality above over the subsequence implies that \linebreak $\limsup g(\vx_{0,i}^{r+1}) \leq g(\vx_{0,i}^*)$. This along with the lower semi-continuity of $g$ proves the claim that $g(\vx_{0,i}^{r_l})\to g(\vx_{0,i}^*)$. 
    Taking the limit of \eqref{x_0_i_update_perturbed_with_g} and using the fact that the proximal operator is continuous for proper, lsc, and convex functions (see Lemma \ref{prox_properties_convex}) yields $\vx_{0,i}^* = \vx_{0,j}^*$ for all $i,j \in [n]$. This completes the proof.
\hfill $\blacksquare$

\section{Numerical Results} \label{numerical_results}
To evaluate the performance of our proposed algorithm, we conducted numerical experiments focusing on the sparse PCA problem \cite{richtarik2021alternating, chang2016asynchronous}:
% \begin{equation}
%     \begin{array}{ll}
%         \underset{\vx \in \RR^p}{\mbox{minimize}} &\displaystyle{\sum_{i=1}^n -\|P_i \vx \|^2 + \lambda \|\vx\|_1}
%     \end{array}
% \end{equation}

\begin{equation} 
    \begin{array}{ccc}
                \underset{\vx \in \RR^p, \| \vx \|^2 \leq 1 }{\mbox{minimize}} &\displaystyle{\sum_{i=1}^n -\|P_i \vx \|^2 + \lambda \|\vx\|_1}, 
    \end{array}
\end{equation}
where $\lambda$ is the regularisation parameter for $\ell_1$ penalised problem and each agent $i$ locally possesses a data matrix $P_i \in \RR^{m_i \times p}$. Existing algorithms in the literature, such as those in \cite{hong2017distributed, chang2016asynchronous, scutari2019distributed}, are restricted to scenarios with a central node in the communication graph and cannot handle the decentralised problem. Moreover, each $f_i(\vx) :=-\vx^T P_i^T P_i \vx$ is a smooth concave function, enabling a closed-form solution for the sub-problem \eqref{x_i_update} for  sufficiently large $\beta$.

In our numerical experiment, we set $n=20$, $p=500$, $\lambda = 10$, and $m_i = 100$ for all $i\in [n]$. Each element of matrix $P_i$ was independently generated from a Gaussian distribution $\cN(0, 0.1^2)$. To ensure convergence, we selected the penalty parameter $\beta$ such that $\beta > 2\max_{i\in [n]} \lambda_{\text{max}}(P_i^T P_i)$. The communication graph took the form of a ring graph, and the weight matrix $W$ was constructed using the Metropolis-Hastings algorithm (e.g. see \cite{xiao2007distributed}). As a benchmark, we also implemented the centralised algorithm from \cite{hong2017distributed}, adapted to the synchronous setting. To measure the performance of the algorithms towards stationarity and the agreement over the decision variable, we use the proximal gradient and disagreement gap, defined as $G^r:= \|(\bar{\vx}^r - \prox_{ g}(\bar{\vx}^r - \sum_{i=1}^n \nabla f_i(\bar{\vx}^r))\|$ and $D_r:= \max_{i\in [n]} \|\vx_i^r - \bar{\vx}^r \|$, respectively. The results are presented in Fig~\ref{fig:20_agents}. It can be observed that for different values of $\tau$, the number of consensus steps in each iteration, the algorithm's performance remains comparable to that of the decentralised algorithm. However, note that a small number of steps leads to  the algorithm performance deterioration, emphasising the importance of appropriately tuning this parameter to ensure convergence.
\begin{figure}[ht]
    \centering
    \begin{subfigure}[b]{0.4\textwidth}
        \centering
        \includegraphics[width=\textwidth]{./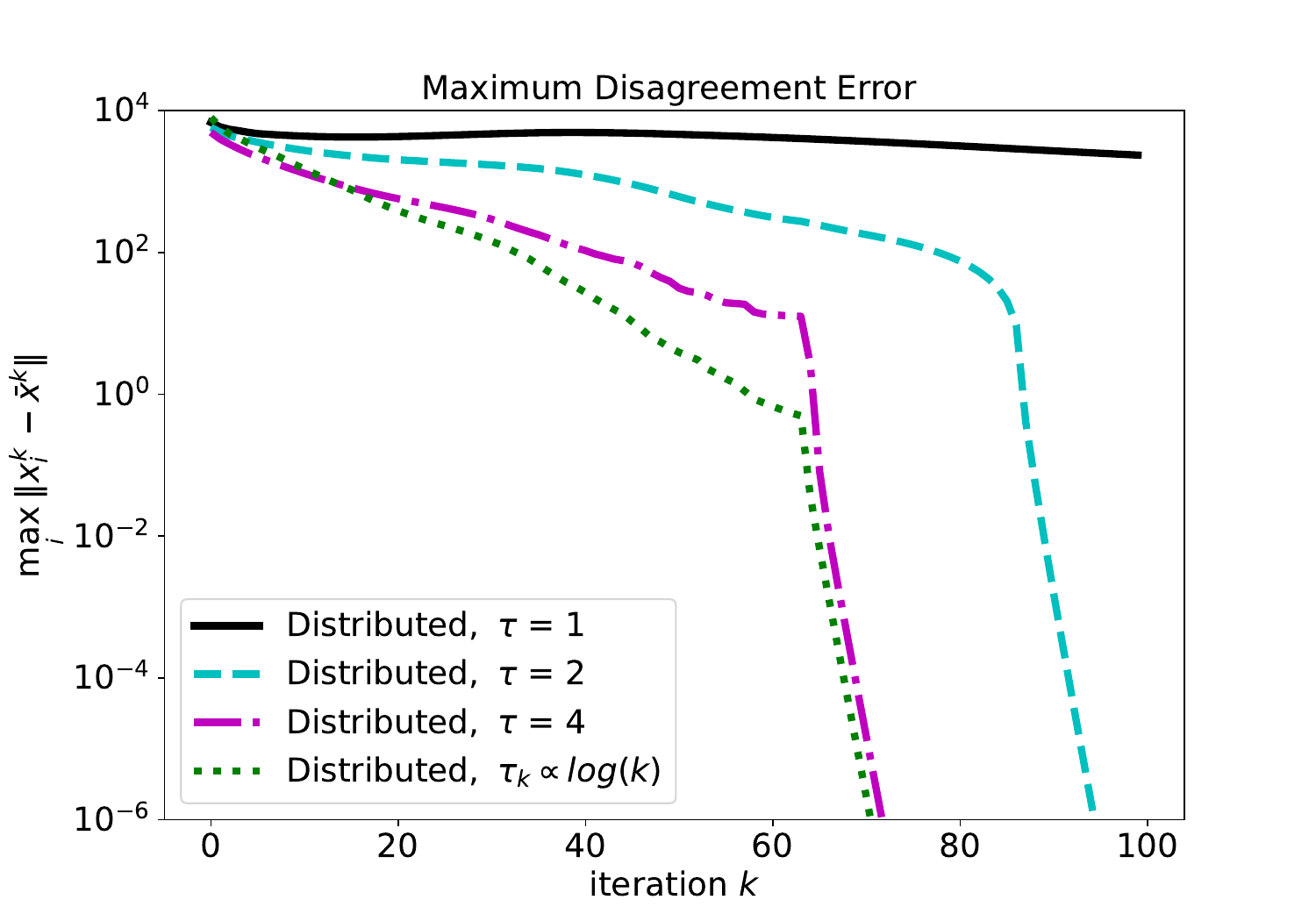}
        \caption{$\log_{10}G^r$ vs. the number of iterations}
        \label{fig:D_r_20}
    \end{subfigure}
    \begin{subfigure}[b]{0.4\textwidth}
        \centering
        \includegraphics[width=\textwidth]{./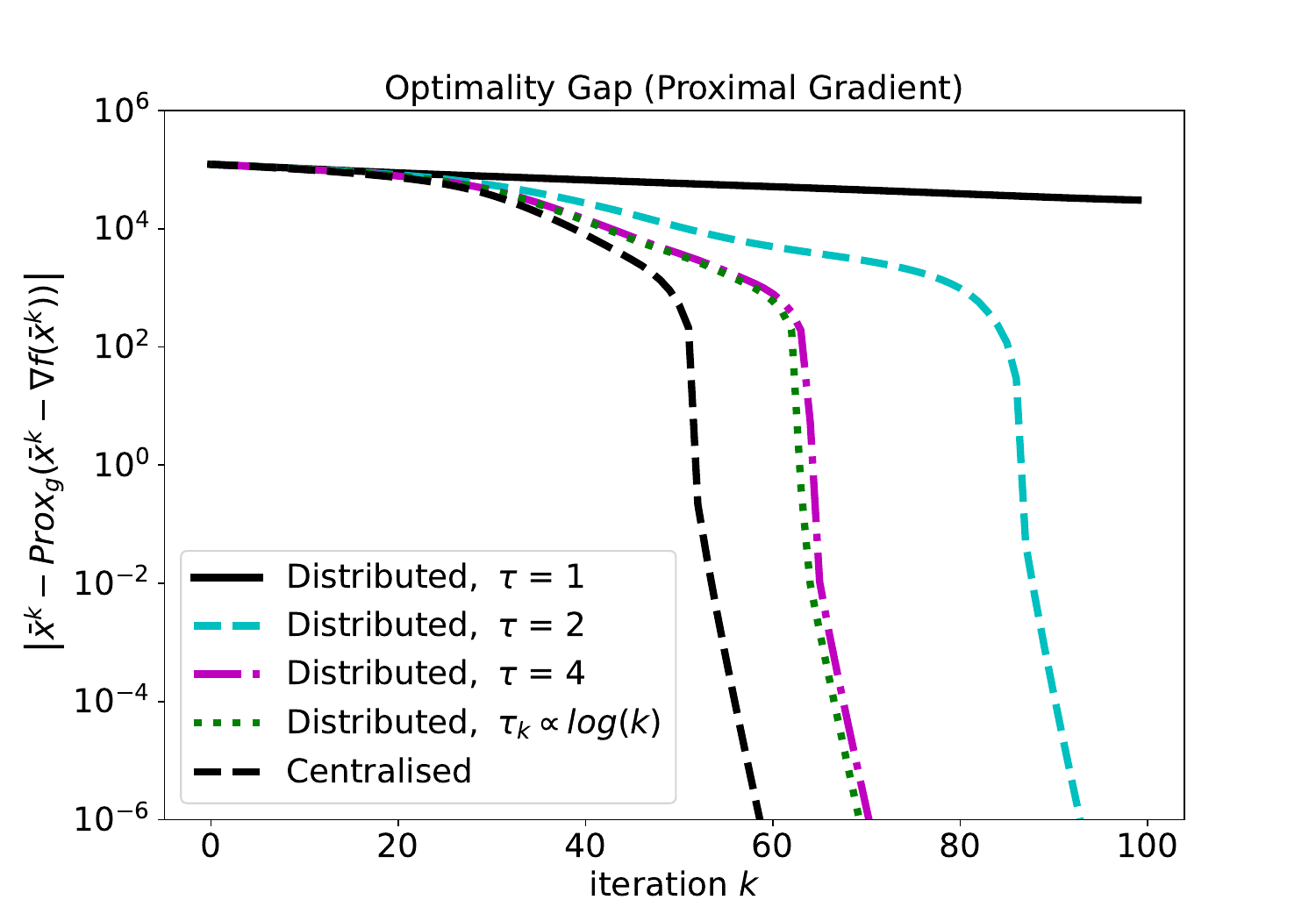}
        \caption{$\log_{10}D^r$ vs. the number of iterations}
        \label{fig:J_r_20}
    \end{subfigure}
    \caption{Distributed PCA problem}%: Centralised \cite{hong2017distributed} and Distributed algorithms}
    \label{fig:20_agents}
\end{figure}
\section{Conclusion}\label{conclusion}
    This paper presents a modified distributed ADMM algorithm designed to tackle nonsmooth nonconvex optimisation problems. Through our analysis, we have demonstrated that by incorporating an adequate number of consensus steps and employing a sufficiently large penalty parameter, the proposed algorithm exhibits convergence to the set of stationary points of the problem. To assess the practical performance of the decentralised ADMM algorithm, we conducted numerical experiments on the sparse PCA problem. The results reveal that even with a small number of inner consensus iterations, the algorithm's performance significantly approaches that of the centralised algorithm in \cite{hong2017distributed}. 
\printbibliography

@article{tatarenko2017non,
  title={Non-convex distributed optimization},
  author={Tatarenko, Tatiana and Touri, Behrouz},
  journal={IEEE Transactions on Automatic Control},
  volume={62},
  number={8},
  pages={3744--3757},
  year={2017},
  publisher={IEEE}
}

@article{scutari2019distributed,
  title={Distributed nonconvex constrained optimization over time-varying digraphs},
  author={Scutari, Gesualdo and Sun, Ying},
  journal={Mathematical Programming},
  volume={176},
  number={1},
  pages={497--544},
  year={2019},
  publisher={Springer}
}

@InProceedings{pmlr-v70-hong17a, title = {Prox-{PDA}: The Proximal Primal-Dual Algorithm for Fast Distributed Nonconvex Optimization and Learning Over Networks}, author = {Mingyi Hong and Davood Hajinezhad and Ming-Min Zhao}, pages = {1529--1538}, year = {2017}, volume = {70}, series = {Proceedings of Machine Learning Research}, month = {8}, publisher = {PMLR}}

@book{boyd2011distributed,
  title={Distributed optimization and statistical learning via the alternating direction method of multipliers},
  author={Boyd, Stephen and Parikh, Neal and Chu, Eric},
  year={2011},
  publisher={Now Publishers Inc}
}

@article{hong2016convergence,
  title={Convergence analysis of alternating direction method of multipliers for a family of nonconvex problems},
  author={Hong, Mingyi and Luo, Zhi-Quan and Razaviyayn, Meisam},
  journal={SIAM Journal on Optimization},
  volume={26},
  number={1},
  pages={337--364},
  year={2016},
  publisher={SIAM}
}

@book{rockafellar1998VariationalAnalysis,
  title = {Variational {{Analysis}}},
  author = {Rockafellar, R. Tyrrell and Wets, Roger J. B.},
  options = {useprefix=true},
  date = {1998},
  series = {Grundlehren Der Mathematischen {{Wissenschaften}}},
  volume = {317},
  publisher = {{Springer Berlin Heidelberg}},
  location = {{Berlin, Heidelberg}},
  langid = {english},
}

@article{bertsekas1997nonlinear,
  title={Nonlinear programming},
  author={Bertsekas, Dimitri P},
  journal={Journal of the Operational Research Society},
  volume={48},
  number={3},
  pages={334--334},
  year={1997},
  publisher={Taylor \& Francis}
}

@phdthesis{tsitsiklis1984problems,
  title={Problems in decentralized decision making and computation},
  author={Tsitsiklis, John},
  year={1984},
  school={Massachusetts Institute of Technology. Laboratory for Information and~…}
}

@article{nedic2009distributed,
  title={Distributed subgradient methods for multi-agent optimization},
  author={Nedic, Angelia and Ozdaglar, Asuman},
  journal={IEEE Transactions on Automatic Control},
  volume={54},
  number={1},
  pages={48--61},
  year={2009},
  publisher={IEEE}
}

@article{deng2016global,
  title={On the global and linear convergence of the generalized alternating direction method of multipliers},
  author={Deng, Wei and Yin, Wotao},
  journal={Journal of Scientific Computing},
  volume={66},
  number={3},
  pages={889--916},
  year={2016},
  publisher={Springer}
}

@incollection{scutari2018parallel,
  title={Parallel and distributed successive convex approximation methods for big-data optimization},
  author={Scutari, Gesualdo and Sun, Ying},
  booktitle={Multi-agent Optimization},
  pages={141--308},
  year={2018},
  publisher={Springer}
}

@article{parikh2014proximal,
  title={Proximal algorithms},
  author={Parikh, Neal and Boyd, Stephen},
  journal={Foundations and Trends in optimization},
  volume={1},
  number={3},
  pages={127--239},
  year={2014},
  publisher={Now Publishers Inc. Hanover, MA, USA}
}

@article{yi2022sublinear,
  title={Sublinear and Linear Convergence of Modified ADMM for Distributed Nonconvex Optimization},
  author={Yi, Xinlei and Zhang, Shengjun and Yang, Tao and Chai, Tianyou and Johansson, Karl Henrik},
  journal={IEEE Transactions on Control of Network Systems},
  volume={10},
  number={1},
  pages={75--86},
  year={2022},
  publisher={IEEE}
}

@article{hong2017distributed,
  title={A distributed, asynchronous, and incremental algorithm for nonconvex optimization: An ADMM approach},
  author={Hong, Mingyi},
  journal={IEEE Transactions on Control of Network Systems},
  volume={5},
  number={3},
  pages={935--945},
  year={2017},
  publisher={IEEE}
}

@article{richtarik2021alternating,
  title={Alternating maximization: unifying framework for 8 sparse PCA formulations and efficient parallel codes},
  author={Richt{\'a}rik, Peter and Jahani, Majid and Ahipa{\c{s}}ao{\u{g}}lu, Selin Damla and Tak{\'a}{\v{c}}, Martin},
  journal={Optimization and Engineering},
  volume={22},
  number={3},
  pages={1493--1519},
  year={2021},
  publisher={Springer}
}

@article{chang2016asynchronous,
  title={Asynchronous distributed ADMM for large-scale optimization—Part I: Algorithm and convergence analysis},
  author={Chang, Tsung-Hui and Hong, Mingyi and Liao, Wei-Cheng and Wang, Xiangfeng},
  journal={IEEE Transactions on Signal Processing},
  volume={64},
  number={12},
  pages={3118--3130},
  year={2016},
  publisher={IEEE}
}

@article{xiao2007distributed,
  title={Distributed average consensus with least-mean-square deviation},
  author={Xiao, Lin and Boyd, Stephen and Kim, Seung-Jean},
  journal={Journal of parallel and distributed computing},
  volume={67},
  number={1},
  pages={33--46},
  year={2007},
  publisher={Elsevier}
}

@book{ryu2022large,
  title={Large-scale convex optimization: algorithms \& analyses via monotone operators},
  author={Ryu, Ernest K and Yin, Wotao},
  year={2022},
  publisher={Cambridge University Press}
}

\end{document}